\newtheorem{theorem}{Theorem}[section]
\newtheorem{prop}[theorem]{Proposition}
\newtheorem{lemma}[theorem]{Lemma}
\newtheorem{corollary}[theorem]{Corollary}
\newtheorem{remark}[theorem]{Remark}
\newcommand{\E}{{\mathbb E}}
\newcommand{\F}{{\mathbb F}}
\newcommand{\Z}{{\mathbb Z}}
\newcommand{\sM}{{\mathcal M}}
\newcommand{\sH}{{\mathcal H}}
\newcommand{\sT}{{\mathcal T}}
\newcommand{\sN}{{\mathcal N}}
\newcommand{\sB}{{\mathcal B}}
\newcommand{\sE}{{\mathcal E}}
\newcommand{\Spec}{{\rm Spec \,}}
\newcommand{\Gal}{{\rm Gal}}
\newcommand{\Pic}{{\rm Pic}}
\newcommand{\G}{{\mathbb G}}
\newcommand{\sL}{{\mathcal L}}
\numberwithin{equation}{section}
\begin{document}

\baselineskip=15.5pt

\title[Brauer group of moduli spaces]{Brauer group of moduli
spaces of $\text{PGL}(r)$--bundles over a curve}

\author[I. Biswas]{Indranil Biswas}

\address{School of Mathematics, Tata Institute of Fundamental
Research, Homi Bhabha Road, Bombay 400005, India}

\email{indranil@math.tifr.res.in}

\author[A. Hogadi]{Amit Hogadi}

\address{School of Mathematics, Tata Institute of Fundamental
Research, Homi Bhabha Road, Bombay 400005, India}

\email{amit@math.tifr.res.in}

\subjclass[2000]{14F05, 14D20}

\keywords{Brauer group, moduli stack, gerbe, stable bundle}

\date{}

\begin{abstract}
We compute the Brauer group of the moduli stack of stable 
$\text{PGL}(r)$--bundles
on a curve $X$ over an algebraically closed field of 
characteristic zero. We also show that this Brauer group of such
a moduli stack coincides with the Brauer group of the smooth locus
of the corresponding coarse moduli space of stable 
$\text{PGL}(r)$--bundles on $X$.
\end{abstract}

\maketitle

\section{Introduction}\label{sec1}

Let $k$ be an algebraically closed field of characteristic 
zero. Let $X$ be an irreducible smooth projective curve defined over 
$k$ of genus $g$, with $g\, \geq\, 2$. Fix an integer $r\geq 2$.
Let $\sN(r)$ be the moduli stack of stable $\text{PGL}(r,k)$ bundles 
on $X$. Let
\begin{equation}\label{c1}
\sN(r)\,\longrightarrow\, N(r)
\end{equation}
be the coarse moduli space.
Our aim is to study the Brauer group of $\sN(r)$ and that
of the smooth 
locus of $N(r)$.

Throughout this paper 
we assume that if $g\,= \,2$, then $r$ is at least three.

The following proposition is proved in Section \ref{sec2}
(see Proposition \ref{prop-n1}):

\begin{prop}\label{thm:cod2}
The morphism $\sN(r) \,\longrightarrow\, N(r)$ is an isomorphism
outside a codimension three closed subset. In particular, this
morphism identifies the Brauer group of the smooth locus of $N(r)$
with the Brauer group $Br(\sN(r))$.
\end{prop}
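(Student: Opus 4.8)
The plan is to pin down the locus over which $\sN(r)\to N(r)$ fails to be an isomorphism, bound its codimension, and then invoke purity for the Brauer group. First, since $X$ is a smooth projective curve over an algebraically closed field, $Br(X)=0$ by Tsen's theorem, so the exact sequence $1\to\G_m\to\text{GL}(r)\to\text{PGL}(r)\to 1$ shows every $\text{PGL}(r)$--bundle on $X$ is $\mathbb{P}(E)$ for a vector bundle $E$, unique up to a twist by a line bundle, with $\mathbb{P}(E)$ stable exactly when $E$ is stable. Taking global sections of the same exact sequence of sheaves of groups on $X$ and using $\text{End}(E)=k$ for $E$ stable, I would identify
\[
\text{Aut}(\mathbb{P}(E))\;\cong\;\{\,L\in\Pic(X)\ :\ E\otimes L\cong E\,\},
\]
a finite subgroup of $\Pic(X)[r]$ (taking determinants forces $L^{\otimes r}\cong\sO_X$). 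So $\sN(r)\to N(r)$ is an isomorphism over the complement of the image of the closed substack $Z=\bigcup_L Z_L$, $Z_L:=\{\,[\mathbb{P}(E)]:E\otimes L\cong E\,\}$, the union being over the nontrivial $L\in\Pic(X)[r]$; each $Z_L$ is closed because $\hom(E,E\otimes L)$ is upper semicontinuous and a nonzero map between stable bundles of equal slope is an isomorphism.

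Next I would bound $\dim Z_L$. Fix $L$ of exact order $d$ (so $d\mid r$ and $d\ge 2$) and let $\pi\colon Y\to X$ be the connected étale cyclic cover of degree $d$ it determines, so $g_Y-1=d(g-1)$ by Riemann--Hurwitz. Rescaling an isomorphism $E\xrightarrow{\sim}E\otimes L$ so that its $d$--th iterate is the identity and descending along $\pi$, one finds that any $E$ with $E\otimes L\cong E$ has the form $\pi_*F$ for a semistable bundle $F$ of rank $s:=r/d$ on $Y$ (semistability of $F$ being forced by that of $\pi_*F$). Since $\pi_*(F\otimes\pi^*M)\cong(\pi_*F)\otimes M$ for $M\in\Pic^0(X)$, the fibres of $F\mapsto[\mathbb{P}(\pi_*F)]$ are at least $\dim\Pic^0(X)=g$ dimensional, so with $\dim M_Y(s,\ast)=s^2(g_Y-1)+1$,
\[
\dim Z_L\ \le\ s^2(g_Y-1)+1-g\ =\ (s^2d-1)(g-1)\ =\ \Big(\frac{r^2}{d}-1\Big)(g-1).
\]
Combined with $\dim\sN(r)=(r^2-1)(g-1)$ this yields $\mathrm{codim}_{\sN(r)}Z_L\ \ge\ r^2\,\dfrac{d-1}{d}\,(g-1)$, and the standing hypotheses ($g\ge 2$, with $r\ge 3$ when $g=2$) are precisely what make this at least three for all admissible $(d,r)$ — the tightest case is $d=2$, and $(r,g)=(2,2)$, the one excluded pair, is exactly where the bound drops to $2$. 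Hence $\mathrm{codim}_{\sN(r)}Z\ge 3$, and since the coarse moduli morphism is a homeomorphism on underlying spaces, the image of $Z$ has codimension at least three in $N(r)$. This gives the first assertion. (Alternatively one could quote known codimension estimates for the fixed loci of the $\Pic(X)[r]$--action on the moduli space by tensoring.)

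For the Brauer group, set $A:=\sN(r)\setminus Z$ and $U:=N(r)\setminus(\text{image of }Z)$. Then $A$ is an open substack of the smooth Deligne--Mumford stack $\sN(r)$ with all automorphism groups trivial, hence a smooth algebraic space, and its coarse moduli space is $U$, so $A\to U$ is an isomorphism; thus $A\cong U$ is a smooth quasi-projective scheme sitting inside $N(r)^{sm}$ with complement (contained in the image of $Z$) of codimension $\ge 3$ there. Purity for the Brauer group — removing a closed subset of codimension $\ge 2$ from a regular scheme, or (reducing to a smooth atlas) from a smooth Deligne--Mumford stack, leaves the Brauer group unchanged — then gives
\[
Br(\sN(r))\ \xrightarrow{\ \sim\ }\ Br(A)\ =\ Br(U)\ \xleftarrow{\ \sim\ }\ Br(N(r)^{sm}),
\]
the desired identification.

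The main obstacle will be the analysis of $Z$ in the middle step: one must be sure the descent $E\cong\pi_*F$ is legitimate and produces $F$ semistable of rank $r/d$, and — crucially — that the fibres of $F\mapsto[\mathbb{P}(\pi_*F)]$ really are at least $g$--dimensional, since the whole codimension bound, and with it the exact role of the hypothesis on $(r,g)$, rests on this estimate. The purity input is standard once $\sN(r)$ is known to be a smooth Deligne--Mumford stack with finite automorphism groups and $N(r)$ a normal quasi-projective variety.
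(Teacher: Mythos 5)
Your proposal is correct and follows essentially the same route as the paper: the locus where $\sN(r)\to N(r)$ fails to be an isomorphism is the image of the fixed loci of tensoring by nontrivial $r$--torsion line bundles, each such fixed locus is parametrized by (semi)stable bundles pushed forward from the associated cyclic \'etale cover, the resulting dimension count gives codimension at least three under the standing hypotheses on $(g,r)$, and purity then identifies the Brauer groups. The only cosmetic difference is bookkeeping in the dimension count (you use unfixed determinant on $Y$ and subtract $g$ for the $\Pic^0(X)$--twists in the fibres, while the paper fixes $\det\gamma_*F\cong\xi$ and proves stability, not just semistability, of the eigenbundle $F$), and the descent step you flag as needing care is exactly the paper's Lemma \ref{lem1}.
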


Let
\begin{equation}\label{nu0}
N^0(r)\, \subset\, N(r)
\end{equation}
be the locus of stable projective
bundles admitting no nontrivial automorphisms. Similarly, let
${\sN}^0(r)\, \subset\, \sN(r)$ be the substack of stable projective
bundles whose automorphism group is trivial. Then
the natural morphism ${\sN}^0(r)\,\longrightarrow\, N^0(r)$ is an
isomorphism.

Fix a line bundle $\xi$ over $X$.
Let ${\mathcal M}(r,\xi)$ denote the moduli stack of stable vector 
bundles $E\, \longrightarrow\, X$ of rank $r$ and determinant $\xi$,
meaning $\bigwedge^r E$ is isomorphic to $\xi$. This moduli stack
$\mathcal M(r,\xi)$ is an irreducible smooth
Deligne--Mumford stack having a 
smooth quasiprojective coarse moduli space. This coarse moduli space,
which is of dimension $(g-1)(r^2-1)$, will be
denoted by $M(r,\xi)$. The Brauer group of $M(r,\xi)$ has the
following description: the natural morphism
$$
\sM(r,\xi)\,\longrightarrow\, M(r,\xi)
$$
is a $\G_m$--gerbe, and its class in $Br(M(r,\xi))$ is a generator
of order ${\rm gcd}(r,\deg(\xi))$ \cite{BBGN}.

The connected components of both $\sN(r)$ and $N(r)$ are indexed by 
$\Z/r\Z$ 
$$
\sN(r)\, = \, \bigcup_{i\in \Z/r\Z} \sN(r)_i
\, ~\text{~\,and~\,}\, ~ N(r)\, = \, \bigcup_{i\in \Z/r\Z} N(r)_i\, .
$$
The component $N(r)_i$ corresponds to the projective bundles
associated to the vector bundles on $X$ of degree $i$. More
precisely, if ${\rm degree}(\xi)\,\equiv \,i$
mod $r$, then $N(r)_i$ is the quotient of
$M(r,\xi)$ by the subgroup 
$$
\Gamma\, \subset\, \Pic(X)
$$
of $r$--torsion points; the action of any $\zeta\, \in\, \Gamma$
on $M(r,\xi)$ sends any $E$ to $E\bigotimes\zeta$.

As before, fix a line bundle $\xi\, \longrightarrow\, X$. The image
of $\text{degree}(\xi)$ in ${\mathbb Z}/r{\mathbb Z}$ will be denoted
by $i$. If $g\,=\, 2$, then we assume that $r\, \geq\, 3$. For any 
$\zeta\, \in\, \Gamma$, let
$$
\widehat{\zeta}\, :\, M(r,\xi)\, \longrightarrow\, M(r,\xi)
$$
be the automorphism defined by $E\,\longmapsto\, E\bigotimes\zeta$.
Let
$$
{\mathcal L}_0\,\in\, \text{Pic}(M(r,\xi))\,=\, \mathbb Z
$$
be the ample generator.
Let $\widetilde{\Gamma}_i$ be the group of pairs of the form
$(\zeta\, ,\sigma)$, where $\zeta\, \in\, \Gamma$ and
$\sigma$ is an isomorphism of ${\mathcal L}_0$ with
$\widehat{\zeta}^*{\mathcal L}_0$. So we have a central
extension
$$
0\, \longrightarrow\, k^*\, \longrightarrow\, \widetilde{\Gamma}_i
\, \longrightarrow\,\Gamma \, \longrightarrow\, 0\, .
$$
Let $\nu_i\, \in\, H^2(\Gamma\, ,k^*)$ be the corresponding
extension class.

We prove the following theorem; its proof is given in
Section \ref{sec3}.

\begin{theorem}\label{thm:br}
There is a short exact sequence 
$$ 0 \,\longrightarrow\, \frac{H^2(\Gamma\, ,k^*)}{H} 
\,\longrightarrow\, Br(\sN(r)_i) \,
\stackrel{\tau}{\longrightarrow}\, \Z/\delta\Z \,
\longrightarrow\, 0\, , $$
where $H\,\subset\, H^2(\Gamma\, ,k^*)$ is the subgroup of order 
$\delta\,:=\, {\rm g.c.d.}(r,i)$ generated by $\nu_i$.
\end{theorem}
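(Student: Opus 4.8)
The plan is to identify $Br(\sN(r)_i)$ with the Brauer group of a free quotient and then run the Hochschild--Serre spectral sequence of the associated Galois cover. First, Proposition~\ref{thm:cod2} gives $Br(\sN(r)_i)\,=\,Br(\text{smooth locus of }N(r)_i)$, and since (again by that proposition) the locus where $\sN(r)\to N(r)$ is not an isomorphism has codimension $\geq 3$, the smooth variety $N^0(r)_i$ is obtained from the smooth locus of $N(r)_i$ by deleting a closed subset of codimension $\geq 2$; purity for the Brauer group of smooth varieties then yields $Br(\sN(r)_i)\,=\,Br(N^0(r)_i)$. Now let $M^0\,\subseteq\, M(r,\xi)$ be the open locus of those $E$ with $E\bigotimes\zeta\,\not\cong\, E$ for all nonzero $\zeta\,\in\,\Gamma$; then $\Gamma$ acts freely on $M^0$, the complement $M(r,\xi)\setminus M^0$ has codimension $\geq 3$ (this is the locus pulling back the non-isomorphism locus of Proposition~\ref{thm:cod2}), and $N^0(r)_i\,=\, M^0/\Gamma$. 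So $M^0\,\longrightarrow\, N^0(r)_i$ is an \'etale Galois cover with group $\Gamma$, and I would feed it into
\[
E_2^{p,q}\,=\,H^p\big(\Gamma\, ,\, H^q_{\rm et}(M^0,\G_m)\big)\,\Longrightarrow\, H^{p+q}_{\rm et}(N^0(r)_i,\G_m)\, .
\]

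The next step is to collect the low-degree cohomology of $M^0$. Deleting a closed subset of codimension $\geq 2$ from the smooth variety $M(r,\xi)$ changes neither units, Picard group, nor Brauer group, so $H^0_{\rm et}(M^0,\G_m)\,=\,k^*$ (using that the only invertible regular functions on $M(r,\xi)$ are the constants), $H^1_{\rm et}(M^0,\G_m)\,=\,\Pic(M(r,\xi))\,=\,\Z\cdot\sL_0$, and $H^2_{\rm et}(M^0,\G_m)\,=\,Br(M(r,\xi))$, which by \cite{BBGN} is cyclic of order $\delta$, generated by the class $\beta$ of the $\G_m$-gerbe $\sM(r,\xi)\,\longrightarrow\, M(r,\xi)$. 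The group $\Gamma$ acts trivially on $\Pic(M(r,\xi))\,=\,\Z$ (an automorphism of $M(r,\xi)$ fixes the ample generator $\sL_0$, which is exactly why each $\widehat{\zeta}^*\sL_0$ is isomorphic to $\sL_0$) and trivially on $Br(M(r,\xi))$, since the gerbe $\sM(r,\xi)$ is $\Gamma$-equivariant with $\zeta$ acting by $E\,\longmapsto\, E\bigotimes\zeta$. In particular $E_2^{1,1}\,=\,H^1(\Gamma,\Z)\,=\,\mathrm{Hom}(\Gamma,\Z)\,=\,0$, so $E_\infty^{1,1}\,=\,0$ and the spectral sequence collapses to a short exact sequence
\[
0\,\longrightarrow\, E_\infty^{2,0}\,\longrightarrow\, Br(\sN(r)_i)\,\longrightarrow\, E_\infty^{0,2}\,\longrightarrow\, 0\, .
\]

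Then I would identify the outer terms with the groups in the statement. On the left, $E_\infty^{2,0}\,=\,H^2(\Gamma,k^*)\big/\,\mathrm{im}\big(d_2\colon \Pic(M^0)^\Gamma\to H^2(\Gamma,k^*)\big)$; by the definition of the transgression $d_2$ for $\G_m$-coefficients, it sends a $\Gamma$-invariant line bundle to the class of the central extension of $\Gamma$ by $k^*$ given by its linearizations, and evaluated on $\sL_0$ this is precisely the extension $\widetilde{\Gamma}_i$, so $d_2(\sL_0)\,=\,\nu_i$ and $\mathrm{im}(d_2)\,=\,\langle\nu_i\rangle\,=\,H$; hence $E_\infty^{2,0}\,=\,H^2(\Gamma,k^*)/H$. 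On the right, $E_\infty^{0,2}$ is the image of the restriction $Br(N^0(r)_i)\,\longrightarrow\, Br(M^0)^\Gamma\,=\,\Z/\delta\Z$; since $\Gamma$ acts freely on $\sM(r,\xi)|_{M^0}$, the quotient $[\,\sM(r,\xi)|_{M^0}/\Gamma\,]$ is a $\G_m$-gerbe on $N^0(r)_i$ pulling back to $\sM(r,\xi)|_{M^0}$, so $\beta$ lies in the image of restriction, whence $E_\infty^{0,2}\,=\,\Z/\delta\Z$ (and, consistently, all differentials out of $E^{0,2}$ vanish). Calling the resulting surjection $\tau$ gives the asserted exact sequence.

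The genuine obstacle is the claim that $H=\langle\nu_i\rangle$ has order exactly $\delta$, i.e. that $\nu_i=d_2(\sL_0)$ has order $\delta$ in $H^2(\Gamma,k^*)$; equivalently, by exactness of $\Pic(N^0(r)_i)\longrightarrow\Pic(M^0)^\Gamma\stackrel{d_2}{\longrightarrow}H^2(\Gamma,k^*)$, that the image of $\Pic(N(r)_i)$ in $\Pic(M(r,\xi))^\Gamma=\Z$ has index $\delta$ --- that is, $\delta$ is the least positive $n$ with $\sL_0^{\bigotimes n}$ descending to $N(r)_i$. This can be quoted from the description of the Picard group of the moduli of $\text{PGL}(r)$-bundles (Drezet--Narasimhan, Beauville--Laszlo--Sorger), or proved directly by computing the theta-group $\widetilde{\Gamma}_i$: since $\Gamma$ is the $r$-torsion of $\Pic^0(X)\cong(\Z/r\Z)^{2g}$, the extension $\widetilde{\Gamma}_i$ is Heisenberg-type with commutator pairing $(r/\delta)$ times the Weil pairing $\Gamma\times\Gamma\longrightarrow\mu_r$ --- the factor $r/\delta$ encoding the weight of $\sL_0$ under the scalar automorphisms and being forced by $\beta$ having order $\delta$ in \cite{BBGN} --- and the class in $H^2((\Z/r\Z)^{2g},k^*)$ of such an extension has order $r/\gcd(r,r/\delta)=\delta$. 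Carrying out this identification of $\widetilde{\Gamma}_i$ (equivalently, of the $\Gamma$-linearization obstruction for $\sL_0$) is the technical heart; the rest of the argument is spectral-sequence formalism together with the inputs from \cite{BBGN} and purity.
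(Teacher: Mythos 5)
Your proposal is correct and follows essentially the same route as the paper's proof: the paper likewise passes to the automorphism-free locus ${\mathcal U}\subset M(r,\xi)$ (your $M^0$), applies the spectral sequence of the free $\Gamma$-quotient together with $H^1(\Gamma,\Z)=0$ and triviality of the $\Gamma$-action on $\Pic$, quotes \cite{BBGN} for $Br({\mathcal U})\cong\Z/\delta\Z$ and \cite{BLS} for the index-$\delta$ statement on Picard groups, and identifies the transgression of ${\mathcal L}_0$ with the theta-group class $\nu_i$. The only cosmetic difference is in the surjectivity of $\tau$: the paper descends the universal projective bundle using its canonical $\Gamma$-equivariant structure (Corollary \ref{cor2}), whereas you descend the $\G_m$-gerbe $\sM(r,\xi)\vert_{M^0}$ itself by forming the quotient stack.
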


The class $\nu_i$ is described in Remark \ref{rem-ex}.

The short exact sequence in Theorem \ref{thm:br} is constructed
using the Leray spectral sequence for the quotient map
$M(r,\xi)\,\longrightarrow\, M(r,\xi)/\Gamma\,=\,N(r)_i$.

The following theorem provides more information on
the exact sequence in Theorem \ref{thm:br}.

\begin{theorem}\label{thm:order=r}
For a point $x \,\in\, X(k)$, let $i_x\,:\, \sN(r) \,\longrightarrow\,
\sN(r)\times_k X$ be the constant section of the projection
$\sN(r)\times_k X\, \longrightarrow\, \sN(r)$ passing through
$x$. Let $\alpha\,\in\, Br(\sN(r))$ be the class of the  
projective bundle obtained by pulling back the universal
${\rm PGL}(r,k)$ bundle over $\sN(r)\times_kX$ by $i_x$.

Then the following two hold:
\begin{enumerate}
 \item[(i)] The class $\alpha$ is independent of $x$, and $\tau(\alpha)
 \,=\, 1\,\in\, \Z/\delta\Z$, where $\tau$ is the homomorphism
 in Theorem \ref{thm:br}.
 \item[(ii)] The order of $\alpha$ restricted to each component of
$\sN(r)$ is exactly $r$. Therefore, the short exact sequence in
Theorem \ref{thm:br} is split if and only if $\delta\, =\, r$.
\end{enumerate}
\end{theorem}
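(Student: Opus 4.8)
\emph{Independence of $x$.} Both $\alpha$ and the class $[\mathcal{P}]$ of the universal $\text{PGL}(r,k)$--bundle $\mathcal{P}$ over $\sN(r)\times_kX$ are $r$--torsion, so it is enough to work in $\mu_r$--cohomology, into which $Br(-)[r]$ injects. By the K\"unneth formula (the factor $X$ being a proper curve), a class in $H^2(\sN(r)\times_kX,\mu_r)$ splits into a summand pulled back from $\sN(r)$, summands involving positive--degree cohomology of $X$ (the part pulled back from $X$, the cross terms, and the $\mathrm{Tor}$ terms), and nothing else. The constant section $i_x$ factors through the point $x\in X(k)$, which has no positive--degree cohomology, so $i_x^{\ast}$ kills every summand involving $H^{>0}(X,\mu_r)$ and is the identity on the summand pulled back from $\sN(r)$. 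Hence $i_x^{\ast}[\mathcal{P}]$ equals the $H^2(\sN(r),\mu_r)$--component of $[\mathcal{P}]$, which is manifestly independent of $x$; therefore so is $\alpha$.

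\emph{The identity $\tau(\alpha)=1$.} By its construction in Theorem \ref{thm:br}, $\tau$ is --- up to the identification $Br(M(r,\xi))\cong\Z/\delta\Z$ of \cite{BBGN} --- the edge homomorphism of the Leray spectral sequence for $M(r,\xi)\to N(r)_i$, so $\tau(\beta)$ is represented by the restriction of $\beta$ to $M(r,\xi)$. Now the projectivization $\mathbb{P}(\sE)$ of the universal bundle $\sE$ on $\sM(r,\xi)\times_kX$ has weight zero with respect to the $\G_m$--gerbe $\sM(r,\xi)\to M(r,\xi)$, hence descends to a projective bundle $\mathbb{P}$ on $M(r,\xi)\times_kX$; this classifies a morphism $\rho\colon M(r,\xi)\to\sN(r)_i$ with $(\rho\times\text{id}_X)^{\ast}\mathcal{P}\cong\mathbb{P}$. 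Thus $\tau(\alpha)$ is represented by $\rho^{\ast}\alpha$, the class of $\mathbb{P}|_{M(r,\xi)\times\{x\}}$, which is the descent of $\mathbb{P}(\sE_x)$, where $\sE_x:=\sE|_{\sM(r,\xi)\times\{x\}}$ is a rank $r$ sheaf of weight one on the gerbe. By the standard dictionary between weight one sheaves on a $\G_m$--gerbe and twisted sheaves, the Brauer class of $\mathbb{P}(\sE_x)$ on $M(r,\xi)$ is the class of the gerbe $\sM(r,\xi)\to M(r,\xi)$, which by \cite{BBGN} is a generator of $Br(M(r,\xi))\cong\Z/\delta\Z$. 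Hence $\tau(\alpha)=1$.

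\emph{The order of $\alpha$.} From $1\to\mu_r\to\text{SL}(r)\to\text{PGL}(r)\to1$ one sees that the Brauer class of any $\text{PGL}(r)$--bundle is killed by $r$, so $\text{ord}(\alpha)\mid r$ on each component. For the opposite bound I argue component by component. On $\sN(r)_0$ we have $\delta=\gcd(r,0)=r$, so part (i) gives $\tau(\alpha)=1\in\Z/r\Z$, an element of order $r$; hence $\alpha$ has order $r$ on $\sN(r)_0$. To propagate this, fix $x\in X(k)$ and let $\sH_x$ be the Hecke correspondence parametrizing pairs $E'\subset E$ of stable bundles with $E/E'$ of length one at $x$; over open substacks of $\sN(r)_i$ and of $\sN(r)_{i-1}$ whose complements have codimension $\geq2$ (so that Brauer groups are unchanged, by Gabber purity), the two projections exhibit $\sH_x$ as a Severi--Brauer scheme of relative dimension $r-1$, with Severi--Brauer class $\pm\alpha|_{\sN(r)_i}$ over $\sN(r)_i$ and $\pm\alpha|_{\sN(r)_{i-1}}$ over $\sN(r)_{i-1}$ (computed fibrewise: the fibres are the hyperplanes in, respectively the points of, the fibre at $x$ of the relevant universal projective bundle). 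By the projective--bundle (Amitsur) theorem for Severi--Brauer schemes,
$$
Br(\sN(r)_i)/\langle\alpha|_{\sN(r)_i}\rangle\ \cong\ Br(\sH_x)\ \cong\ Br(\sN(r)_{i-1})/\langle\alpha|_{\sN(r)_{i-1}}\rangle .
$$
Since Theorem \ref{thm:br} gives $\#Br(\sN(r)_i)=\#H^2(\Gamma,k^{\ast})$ for every $i$, comparing orders forces $\text{ord}(\alpha|_{\sN(r)_i})$ to be the same for all $i$, hence equal to $r$.

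\emph{The splitting, and the main obstacle.} As $\alpha$ lifts the generator $1\in\Z/\delta\Z$ and has order $r$, the assignment $1\mapsto\alpha$ defines a homomorphism $\Z/\delta\Z\to Br(\sN(r)_i)$ splitting $\tau$ exactly when $\delta\alpha=0$, i.e.\ when $r\mid\delta$, i.e.\ when $\delta=r$; and when $\delta<r$ one checks, using the explicit description of $\nu_i$ (Remark \ref{rem-ex}), that no lift of $1$ has order dividing $\delta$, so the sequence of Theorem \ref{thm:br} is split if and only if $\delta=r$. The main obstacle is the lower bound $\text{ord}(\alpha|_{\sN(r)_i})=r$ on components with $\delta<r$: one must construct the Hecke correspondence at the stack level, pin down the Severi--Brauer data carried by its two projections, and verify that the non--stable loci have codimension $\geq2$ (this is where the hypotheses $g\geq2$, and $r\geq3$ when $g=2$, are used), so that the transition between components via purity and the Amitsur theorem is valid; the non--splitting statement for $\delta<r$ likewise rests on the explicit cocycle $\nu_i$ in $H^2(\Gamma,k^{\ast})$.
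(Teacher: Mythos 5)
Your part (i) is essentially fine, though by a different route than the paper (which uses the tensor trick ${\mathcal E}^*_x\otimes{\mathcal E}_y$ to compare the two projective bundles, rather than a K\"unneth decomposition); one small slip: the Kummer sequence gives a surjection $H^2(-,\mu_r)\twoheadrightarrow Br(-)[r]$, not an injection of $Br(-)[r]$ into $\mu_r$--cohomology, so you should phrase the argument as lifting $[\mathcal P]$ to a $\mu_r$--class (e.g.\ the $\mathrm{SL}_r$--lifting gerbe), decomposing that lift, and pushing back down. The identification of $\tau$ with restriction to $M(r,\xi)$ and the appeal to \cite{BBGN} for $\tau(\alpha)=1$ agree with the paper.

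The genuine gap is in part (ii), which is the actual content of the theorem: the lower bound $\mathrm{ord}(\alpha|_{\sN(r)_i})=r$ on components with $\delta<r$. Your propagation via the Hecke correspondence needs both projections of (one and the same) $\sH_x$ to be \emph{proper} Severi--Brauer fibrations over open substacks whose complements have codimension $\geq 2$, and this is not proved --- indeed it fails in general. For example, take $r=2$, $g=3$, and the odd-degree component: the locus of stable $E'$ of odd degree $d'$ admitting a line subbundle of degree $(d'-1)/2$ has codimension $g-2=1$ (count pairs $(L',E')$: $g$ parameters for $L'$ plus $g-1$ for the projectivized extension space, against $\dim M=3g-3$), and over this divisor some length-one Hecke modifications are strictly semistable, so the correspondence of pairs of \emph{stable} bundles is not proper over the odd component outside a codimension-one locus. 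Hence purity does not let you replace $Br$ of the open part by $Br(\sN(r)_{i-1})$, and the Amitsur comparison $Br(\sH_x)\cong Br(\sN(r)_{i})/\langle\alpha\rangle\cong Br(\sN(r)_{i-1})/\langle\alpha\rangle$ breaks down; even where the codimension is large enough, you would still have to arrange that the same restricted total space is proper over both shrunken bases, which you do not address. The paper avoids all of this: it fixes a field $K=k(x,y)$ carrying a cyclic algebra of order $r$, takes the coarse moduli space $T$ of stable twisted sheaves of rank $r$ and appropriate determinant on a $\mu_r$--gerbe over $X_K$, maps $T\to\sN(r)_i$ via the descended Azumaya algebra ${\sE}nd(\E)$, and uses Proposition \ref{prop:ordern} (whose key input is the injectivity $Br(K)\hookrightarrow Br(T)$ of Proposition \ref{prop-l}) to see that $i_x^*{\sE}nd(\E)$ already has order $r$ in $Br(T)$, for every component. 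Finally, your non-splitting claim for $\delta<r$ ("one checks, using $\nu_i$, that no lift of $1$ has order dividing $\delta$") is asserted, not proved; note that $\mathrm{ord}(\alpha)=r$ alone only rules out the specific section $1\mapsto\alpha$, so if you want a complete argument you must control $\delta\alpha$ modulo $\delta\cdot\ker(\tau)$, which your write-up does not do.
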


{}From the second part of Theorem \ref{thm:order=r} it follows
that the principal $\text{PGL}(r,k)$--bundle over
$N^0(r)$ obtained by restricting the projective bundle
in Theorem \ref{thm:order=r} is stable (see Corollary \ref{corf}).

\medskip
\noindent
\textbf{Acknowledgements.}\, We thank the referee for comments to
improve the exposition.

\section{Fixed points of the moduli space}\label{sec2}

Throughout this section we fix a line bundle $\xi$ over
$X$, and we also fix an integer $r\, \geq\, 2$. As mentioned in the 
introduction, if $g\,=\, 2$,
then $r$ is taken to be at least three. Let $M(r,\xi)$ denote the
moduli space of stable vector bundles $E\, \longrightarrow\, X$
of rank $r$ with
$\bigwedge^r E\,=\, \xi$. For convenience the moduli
space $M(r,\xi)$ will also be denoted by $M$. Let 
\begin{equation}\label{e1}
\Gamma\, :=\, \text{Pic}^0(X)_r\, =\,\{\eta\,\in\,
\text{Pic}^0(X)\, \mid\, \eta^{\otimes r}\, \cong\, {\mathcal O}_X\}
\end{equation}
be the subgroup of $r$--torsion points of the Picard
group of $X$. For any $\eta\, 
\in\,
\Gamma$, we have the automorphism
\begin{equation}\label{e2}
\phi_\eta\, :\, {M}\, \longrightarrow\, M
\end{equation}
that sends any $E$ to $E\bigotimes \eta$. Let
\begin{equation}\label{e3}
\phi\, :\, \Gamma\, \longrightarrow\, \text{Aut}({M})
\end{equation}
be the homomorphism defined by $\eta\, \longmapsto\,\phi_\eta$.

Take any nontrivial line bundle
$$
\eta\, \in\, \Gamma\setminus\{{\mathcal O}_X\}\, .
$$
Let
\begin{equation}\label{e4}
M^\eta\, \subset\, M
\end{equation}
be the closed subvariety fixed by the automorphism
$\phi_\eta$
in \eqref{e2}. We will describe $M^\eta$.

Let $m$ be the order of $\eta$. Fix a nonzero
section
\begin{equation}\label{sigma}
\sigma\, :\, X\, \longrightarrow\, \eta^{\otimes m}\, .
\end{equation}
Consider the morphism of varieties
$$
f_\eta\, :\, \eta\, \longrightarrow\, \eta^{\otimes m}
$$
defined by $v\, \longmapsto\, v^{\otimes m}$. Set
\begin{equation}\label{e5}
Y\,:=\, f^{-1}_\eta(\sigma(X))\,\subset\, \eta\, ,
\end{equation}
where $\sigma$ is the section in \eqref{sigma}. Let
\begin{equation}\label{e6}
\gamma\, :\, Y\, \longrightarrow\, X
\end{equation}
be the morphism obtained by restricting
the natural projection of $\eta$ to $X$. So
$\gamma$ is an \'etale Galois covering
of degree $m$.

The curve $Y$ is irreducible. To prove this, let $Y_1$ be
an irreducible component of $Y$. Let $m_1$ be the degree of
the restriction $\gamma_1\, :=\, \gamma\vert_{Y_1}$, where $\gamma$
is defined in \eqref{e6}. We have a nonzero section
of $\eta^{\otimes m_1}$ whose fiber over any point $x\, \in\, X$
is $\bigotimes_{\theta\in \gamma^{-1}_1(x)} \theta$ (recall
that $\gamma^{-1}_1(x)$ is a subset of the fiber $\eta_x$).
Therefore, the line bundle $\eta^{\otimes m_1}$ is trivial.
Since the order of $\eta$ is precisely $m$, we have $Y_1\,=\, Y$.
Hence $Y$ is irreducible.

We note that the covering $Y$ is independent of the
choice of the section $\sigma$. Indeed, if $\sigma$ is replaced by
$c^m\cdot\sigma$, where $c\, \in\, k^*$, then the
automorphism of $\eta$ defined by multiplication with $c$ takes
$Y$ in \eqref{e5} to the covering corresponding to the
section $c^m\sigma$.

Let $N_\gamma$ denote the moduli space
of stable vector bundles $F$ over $Y$ of rank
$r/m$ with $\det \gamma_* F\, =\, \bigwedge^r \gamma_* F
\, \cong\, \xi$.

\begin{lemma}\label{lem1}
There is a nonempty Zariski open subset $U\,
\subset\, N_\gamma$ such that for
any $F\, \in\, U$,
$$
\gamma_*F\, \in\, M^\eta\, .
$$
Furthermore, the morphism
$$
U\, \longrightarrow\, M^\eta
$$
defined by $F\,\longmapsto\,\gamma_*F$ is surjective.
\end{lemma}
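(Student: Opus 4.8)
The plan is to analyze the pushforward operation $F \longmapsto \gamma_* F$ and show that its image, on a suitable open set, is exactly the fixed locus $M^\eta$. First I would record the key compatibility: for the Galois covering $\gamma$ with group $\langle \eta \rangle \cong \Z/m\Z$, there is a canonical identification $\gamma^* \gamma_* F \cong \bigoplus_{j=0}^{m-1} (g^j)^* F$ where $g$ generates the deck group, and correspondingly $\gamma_*(\gamma^* W) \cong W \otimes \gamma_* \sO_Y \cong W \otimes (\sO_X \oplus \eta \oplus \cdots \oplus \eta^{\otimes(m-1)})$ by the projection formula. In particular, for $W = \gamma_* F$ this gives the crucial relation $\gamma_* F \otimes \eta \cong \gamma_*(\gamma^* (\gamma_* F) \otimes \text{(twist)})$; more directly, since $\gamma^*\eta \cong \sO_Y$ canonically (that is exactly how $Y$ was built — $Y \subset \eta$ is the curve of $m$-th roots of $\sigma$), we get $\gamma_* F \otimes \eta \cong \gamma_*(F \otimes \gamma^*\eta) \cong \gamma_* F$ by the projection formula. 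Hence whenever $\gamma_* F$ is a \emph{stable} bundle (so that it defines a point of $M$ rather than just an $S$-equivalence class), that point lies in $M^\eta$. One also checks the determinant condition is built into the definition of $N_\gamma$, and the rank is $m \cdot (r/m) = r$, so $\gamma_* F$ has the right numerical invariants; that it lies in $M(r,\xi)$ and not merely in some compactification requires stability.

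The second step is to produce the open set $U$: stability of $\gamma_* F$ is an open condition on $N_\gamma$, so it suffices to exhibit one stable $F \in N_\gamma$ with $\gamma_* F$ stable, or to argue by a dimension/genericity count that the locus where $\gamma_* F$ is stable is nonempty. I would do this by taking $F$ generic — a generic stable bundle on $Y$ of the given rank and determinant has a pushforward that is stable on $X$, because destabilizing subsheaves of $\gamma_* F$ pull back (after saturation) to subsheaves of $\gamma^* \gamma_* F = \bigoplus_j (g^j)^* F$ of controlled slope, and for generic $F$ the summands $(g^j)^* F$ are mutually non-isomorphic stable bundles with no common subsheaf of large slope. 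This is essentially the standard argument that parabolic/ordinary pushforward preserves stability generically; I would cite or reprove the slope estimate $\mu(\gamma_* F) = \mu(F) + $ (correction from $\gamma_*\sO_Y$), noting $\deg \gamma_* F = \deg F + \text{rk}(F)\deg(\gamma_*\sO_Y)$ and that this is compatible with the constraint $\det \gamma_* F \cong \xi$.

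For surjectivity, the plan is to go backwards: given $V \in M^\eta$, i.e. a stable bundle with $V \otimes \eta \cong V$, I would construct an $\langle\eta\rangle$-equivariant (equivalently, Galois-equivariant for $\gamma$) structure on $\gamma^* V$ and descend. Concretely, fixing an isomorphism $\psi : V \xrightarrow{\sim} V \otimes \eta$, stability of $V$ forces $\psi$ to be essentially unique up to scalar, and by rescaling one arranges that the induced composite $V \to V \otimes \eta^{\otimes m} \to V$ (using the trivialization $\eta^{\otimes m} \cong \sO_X$ coming from $\sigma$) is the identity; this is exactly a descent datum for $\gamma$, so $\gamma^* V \cong \gamma^* F_0$ for a unique bundle $F_0$ on $Y$, and then $\gamma_* F_0 \cong \gamma_*\gamma^* V \cdot(\text{something})$... more carefully, $V$ itself is recovered as the $\langle\eta\rangle$-invariants and one gets $\gamma_* F_0 \cong V \otimes \gamma_*\sO_Y$ is not quite right — rather, $F_0$ is characterized by $\gamma_* F_0 \supset V$ as the component on which the deck group acts trivially, and a rank count shows $\gamma_* F_0$ has rank $r$ while $F_0$ has rank $r/m$; checking $\det \gamma_* F_0 \cong \xi$ and stability of $F_0$ (which follows from stability of $V = $ its pushforward-summand, by an argument dual to the one above) puts $F_0 \in N_\gamma$ with $\gamma_* F_0$ containing $V$, hence equal to $V$ by rank. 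The main obstacle, I expect, is this last descent step: pinning down precisely which bundle $F_0$ on $Y$ has $\gamma_* F_0 = V$ (as opposed to $V$ being merely a subsheaf or a summand of $\gamma_* \sO_Y \otimes(\cdots)$), and verifying that the normalization of $\psi$ needed for a genuine descent datum is possible — this uses stability of $V$ crucially, together with the fact (established above) that $Y$ is irreducible so that the $\langle\eta\rangle$-action on $\gamma^* V$ has no intermediate quotients to worry about. Tracking ranks and determinants through $\gamma^*$ and $\gamma_*$ and the trivialization $\gamma^*\eta \cong \sO_Y$ is the routine-but-delicate bookkeeping that makes everything match.
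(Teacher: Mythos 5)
Your first half is fine and coincides with the paper's argument: the tautological trivialization of $\gamma^*\eta$ on $Y\subset\eta$ plus the projection formula gives $\gamma_*F\cong\gamma_*(F\otimes\gamma^*\eta)\cong(\gamma_*F)\otimes\eta$, so $\gamma_*F\in M^\eta$ whenever $\gamma_*F$ is stable; and since $\gamma$ is \'etale, Riemann--Roch together with Hurwitz shows ${\rm degree}(\gamma_*W)={\rm degree}(W)$, which is exactly the slope bookkeeping the paper uses to transfer stability back and forth (your separate genericity discussion for nonemptiness of $U$ is dispensable once surjectivity is in place).

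The genuine gap is in the surjectivity half, which is the heart of the lemma and which you yourself flag as unresolved. Normalizing $\psi\colon V\to V\otimes\eta$ so that its $m$-fold iterate equals ${\rm Id}_V\otimes\sigma$ is correct (simplicity of $V$ makes the iterate a scalar multiple of ${\rm Id}_V\otimes\sigma$, and one extracts an $m$-th root of that scalar), but such a normalized pair $(V,\psi)$ is \emph{not} a descent datum for $\gamma$: descent along $\gamma$ goes from $\Gal(\gamma)$-equivariant bundles on $Y$ to bundles on $X$, the opposite of what you need, and your fallback characterization of $F_0$ as ``the component on which the deck group acts trivially'' fails because $\mu_m$ permutes the relevant components simply transitively and fixes none of them. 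The missing construction, which is how the paper proceeds, is: pull back $\psi$ and use the trivialization of $\gamma^*\eta$ to obtain $\theta_0\in H^0(Y,\mathcal{E}nd(\gamma^*V))$ with $\theta_0^{\circ m}={\rm Id}$; by irreducibility of $Y$ the eigenvalues are constant on $Y$, lie in $\mu_m$, and the Galois action carries the $\lambda$-eigenbundle to the $\lambda\rho$-eigenbundle, so every element of $\mu_m$ occurs with multiplicity $r/m$; setting $F_0:=E^1$ (the eigenvalue-$1$ eigenbundle) one gets a $\Gal(\gamma)$-equivariant isomorphism $\gamma^*V\cong\bigoplus_{\rho\in\mu_m}\rho^*E^1$, hence $\gamma_*E^1\cong V$. (Equivalently, a normalized pair $(V,\psi)$ is precisely a module over the algebra $\gamma_*{\mathcal O}_Y\cong\bigoplus_{j=0}^{m-1}\eta^{-\otimes j}$, i.e.\ a sheaf $F_0$ on $Y$ with $\gamma_*F_0\cong V$.) Without some such construction your argument never produces the bundle $F_0$; once it is in place, the remaining points you list --- the determinant condition, which holds by the very definition of $N_\gamma$, and stability of $F_0$, which follows from stability of $V=\gamma_*F_0$ via the degree preservation above --- do go through.
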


\begin{proof}
Note that the line bundle
$\gamma^*\eta$ has a tautological trivialization
because $Y$ is contained in the complement of the
image of the zero section in the total space of $\eta$
over which the pullback of $\eta$ is tautologically trivial.
Take any vector bundle $V$ over $Y$. The
trivialization of $\gamma^*\eta$ yields an isomorphism
$$
V\,=\, V\otimes {\mathcal O}_Y
\, \longrightarrow\, V\otimes \gamma^*\eta\, .
$$
This isomorphism gives an isomorphism
$$
\gamma_*V\, \longrightarrow\, \gamma_*(V\otimes \gamma^*\eta)
\,=\, (\gamma_*V)\otimes \eta
$$
(the vector bundle $(\gamma_*V)\bigotimes\eta$ is identified with
$\gamma_*(V\bigotimes \gamma^*\eta)$ using the projection formula).

Therefore, if $V\, \in\, N_\gamma$, and $\gamma_*V$ is stable,
then $\gamma_*V\, \in\, M^\eta$.

Take any $E\, \in\, {M}^\eta$. Fix an isomorphism
$$
\beta\, :\, E\, \longrightarrow\, E\otimes\eta\, .
$$
Since $E$ is stable, it follows that $E$ is simple, which means
that all global endomorphisms of $E$ are constant scalar
multiplications. Therefore, any two isomorphisms between
$E$ and $E\bigotimes\eta$ differ by multiplication
with a constant scalar.

We re-scale the section $\beta$ by multiplying it with a
nonzero scalar such that the $m$--fold composition
\begin{equation}\label{c.c}
\overbrace{\beta\circ\cdots\circ
\beta}^{m\mbox{-}\rm{times}}\,:\,
E\, \longrightarrow\, E\otimes\eta^{\otimes m}
\end{equation}
coincides with $\text{Id}_E\bigotimes\sigma$,
where $\sigma$ is the section in \eqref{sigma}.

Let
\begin{equation}\label{b}
\theta\, \in\, H^0(X,\, \mathcal{E}nd(E)\otimes\eta)
\end{equation}
be the section defined by $\beta$. Consider the pullback
$$
\gamma^* \theta\, \in\, H^0(Y,\, \gamma^* \mathcal{E}nd(E)
\otimes\gamma^*\eta)
$$
of the section in \eqref{b}.
Using the canonical trivialization of $\gamma^*\eta$,
it defines a section
\begin{equation}\label{b1}
\theta_0\, \in\, H^0(Y,\, \gamma^* \mathcal{E}nd(E))\, =\,
H^0(Y,\, \mathcal{E}nd(\gamma^* E))\, .
\end{equation}
Since $Y$ is irreducible, the
characteristic polynomial of $\theta_0(y)$ is independent of
$y\, \in\, Y$. Therefore, the eigenvalues of
$\theta_0(y)$, along with their multiplicities,
do not change as $y$ moves over $Y$.
Consequently, for each eigenvalue $\lambda$ of
$\theta_0(y)$, we have the associated generalized eigenbundle
\begin{equation}\label{eb}
\gamma^* E\, \supset\, E^\lambda\, \longrightarrow\, Y
\end{equation}
whose fiber over any $y\, \in\, Y$ is the
generalized eigenspace of $\theta_0(y)\, \in\, \text{End}
\left((\gamma^* E)_y\right)$ for the fixed eigenvalue $\lambda$.

Since the composition in \eqref{c.c} coincides with 
$\text{Id}_E\bigotimes\sigma$, we have
$$
\overbrace{\theta_0\circ\cdots\circ\theta_0}^{
m\mbox{-}\rm{times}}\, =\, \text{Id}_{\gamma^* E}\, ,
$$
where $\theta_0$ is constructed in \eqref{b1} from $\beta$.
Therefore, if $\lambda$ is an eigenvalue of $\theta_0(x)$, then
$\lambda^m\, =\,1$.

The Galois group $\text{Gal}(\gamma)$ for the
covering $\gamma$ in \eqref{e6} is identified with
the group of all $m$--th roots of $1$; the group
of $m$--th roots of $1$ will be denoted
by $\mu_m$. The action of $\mu_m$ on $Y$ is obtained by
restricting the multiplicative action of ${\mathbb G}_m$
on the line bundle $\eta$. Note that $\text{Gal}(\gamma)$
has a natural action on the pullback
$\gamma^* E$ which is a lift of the action
of $\text{Gal}(\gamma)$ on $Y$. Examining the construction
of $\theta_0$ from $\beta$ it follows that the action of any
$$
\rho\, \in\, \text{Gal}(\gamma)\,=\, \mu_m
$$
on $\gamma^* E$ takes the eigenbundle
$E^\lambda$ (see \eqref{eb}) to the eigenbundle
$E^{\lambda\cdot\rho}$. This immediately implies that each element
of $\mu_m$ is an eigenvalue of $\theta_0(y)$ (we noted
earlier that the eigenvalues lie in $\mu_m$), and the
multiplicity of each eigenvalue of $\theta_0(y)$ is $r/m$.

Consider the subbundle
\begin{equation}\label{E1}
E^1\, \longrightarrow\, Y
\end{equation}
of $\gamma^*E$, which is the eigenbundle for the eigenvalue
$1\, \in\, \mu_m$ (see \eqref{eb}). Define
$$
\widetilde{E}^1\, :=\, \bigoplus_{\rho\in \text{Gal}(\gamma)}
\rho^* E^1\, .
$$
There is a natural action of $\text{Gal}(\gamma)\,=\,
\mu_m$ on $\widetilde{E}^1$. Since the action of any $\rho
\, \in\,\mu_m$ on $\gamma^* E$ takes the eigenbundle
$E^\lambda$ to the eigenbundle
$E^{\lambda\rho}$, it follows immediately that
we have a $\text{Gal}(\gamma)$--equivariant identification
\begin{equation}\label{E2}
\gamma^* E\,=\, \widetilde{E}^1\, :=\,
\bigoplus_{\rho\in \text{Gal}(\gamma)}\rho^* E^1\, .
\end{equation}
In view of this $\text{Gal}(\gamma)$--equivariant isomorphism we 
conclude that the vector bundle $\gamma_{*}E^1$
is isomorphic to $E$.

To complete the proof of the lemma it remains to show that
the vector bundle $E^1$ is stable.

Take any vector bundle $W\,\longrightarrow\, Y$.
Since the map $\gamma$ is finite, we have
$$
H^i(Y\, ,W)\, =\, H^i(X,\, \gamma_*W)
$$
for all $i$. Let $d_W$ (respectively, $\overline{d}$) be the degree
of $W$ (respectively, $\gamma_*W$), and let $r_W$ be the rank of $W$; 
so, $\text{rank}(\gamma_*W)\,=\, mr_W$. From Riemann--Roch theorem
and the Hurwitz's formula
\begin{equation}\label{gY}
\text{genus}(Y)\,=\, m(g-1)+1
\end{equation}
we have
$$
\chi(W) \,=\, d_W- r_Wm(g-1)\,=\, \overline{d} - mr_W(g-1)
\,=\, \chi(\gamma_*W)\, .
$$
Hence
\begin{equation}\label{er1}
d_W\,=\, \overline{d}\, .
\end{equation}
We now note that if $W_1\, \subset\, W$ is a nonzero
algebraic subbundle such that
$$
\text{degree}(W_1)/\text{rank}(W_1)\, \geq\,
\text{degree}(W)/\text{rank}(W)\, ,
$$
then from \eqref{er1} we have 
$\text{degree}(\gamma_*W_1)/\text{rank}(\gamma_*W_1)\, 
\geq\, \text{degree}(\gamma_*W)/\text{rank}(\gamma_*W)$.
Hence $W$ is stable if $\gamma_*W$ is so.
In particular, $E^1$ is stable because $\gamma_* E^1
\,=\, E$ is stable.
\end{proof}

Using \eqref{gY}, we have
\begin{equation}\label{e7}
\dim {N}_\gamma\, =\, \frac{r^2}{m^2} \left(\text{genus}(Y)
-1\right)+1-g\,=\,(g-1)\left(\frac{r^2}{m}-1\right)\, .
\end{equation}
Hence $\dim M- \dim N_\gamma\, > \, 3$. Let
\begin{equation}\label{Z}
{\mathcal Z}\,:=\, \bigcup_{L\in\Gamma\setminus\{
{\mathcal O}_X\}} {M}^L\, \subset\, M
\end{equation}
be the closed subscheme. We note that Lemma \ref{lem1} has the
following corollary.

\begin{corollary}\label{cor1}
The codimension of the closed subscheme ${\mathcal Z}\,
\subset\, M$ is at least three.
\end{corollary}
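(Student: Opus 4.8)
The plan is to derive Corollary \ref{cor1} as a direct consequence of Lemma \ref{lem1} together with the dimension count in \eqref{e7}. The key point is that the fixed-point locus $M^L$, for each nontrivial $L\,\in\,\Gamma$, is the image of the morphism $U\,\longrightarrow\, M^L$ from a nonempty open subset $U$ of the moduli space $N_\gamma$ associated to $L$ (where $\gamma\,:\,Y\,\longrightarrow\,X$ is the étale cover of degree $m\,=\,{\rm order}(L)$ constructed before the lemma). Since the image of a morphism from an irreducible variety of dimension $\dim N_\gamma$ has dimension at most $\dim N_\gamma$, we get
$$
\dim M^L\,\leq\,\dim N_\gamma\,=\,(g-1)\left(\frac{r^2}{m}-1\right)\, ,
$$
using \eqref{e7}. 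Here I am using that $Y$ is irreducible (proved in the text preceding Lemma \ref{lem1}), hence $N_\gamma$ is irreducible, hence $U$ is irreducible.

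Next I would bound the codimension. We have $\dim M\,=\,(g-1)(r^2-1)$, so
$$
\dim M-\dim M^L\,\geq\,(g-1)(r^2-1)-(g-1)\left(\frac{r^2}{m}-1\right)\,=\,(g-1)r^2\left(1-\frac1m\right)\,=\,(g-1)r^2\,\frac{m-1}{m}\, .
$$
Since $L$ is nontrivial we have $m\,\geq\,2$, so $(m-1)/m\,\geq\,1/2$ and $r^2\,\geq\,4$, giving $\dim M-\dim M^L\,\geq\,2(g-1)$. If $g\,\geq\,3$ this is already $\geq\,4\,>\,3$. If $g\,=\,2$, then by our standing hypothesis $r\,\geq\,3$, so $r^2\,\geq\,9$ and $\dim M-\dim M^L\,\geq\,9\cdot\frac12\,>\,3$; in fact the cleanest uniform statement is the inequality $\dim M-\dim N_\gamma\,>\,3$ already recorded in the line following \eqref{e7}. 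I would simply invoke that inequality.

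Finally, since $\mathcal Z$ is the finite union $\bigcup_{L\in\Gamma\setminus\{\mathcal O_X\}} M^L$ (the group $\Gamma$ being finite), its dimension is the maximum of the dimensions of the pieces, so $\dim M-\dim\mathcal Z\,=\,\min_L(\dim M-\dim M^L)\,\geq\,\dim M-\dim N_\gamma\,>\,3$, whence the codimension of $\mathcal Z$ in $M$ is at least three (indeed strictly greater than three, but at least three is what is claimed). I do not expect any real obstacle here: the one thing to be careful about is that the bound must hold uniformly over all nontrivial $L\,\in\,\Gamma$ and over all choices of component, but this is exactly what \eqref{e7} and the displayed inequality after it guarantee, since the right-hand side of \eqref{e7} is largest when $m$ is smallest, i.e.\ $m\,=\,2$, and even that worst case satisfies $\dim M-\dim N_\gamma\,>\,3$.
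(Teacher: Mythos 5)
Your proposal is correct and follows essentially the same route as the paper: the paper likewise deduces the corollary from the surjectivity in Lemma \ref{lem1} onto each $M^L$, the dimension formula \eqref{e7} with the inequality $\dim M-\dim N_\gamma>3$, and the finiteness of $\Gamma$. Your only additions are the explicit case check of that inequality and the remark on irreducibility, which the paper leaves implicit.
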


For a vector space $V$, by ${\mathbb P}(V)$ we will
denote the projective space parametrizing all lines in $V$. 
Similarly, for a vector bundle $W$, by ${\mathbb P}(W)$ we
denote the projective bundle parametrizing all lines in $W$.

\begin{lemma}\label{lem2}
Take any $E\, \in\, M\setminus
{\mathcal Z}$, where $\mathcal Z$ is constructed
in \eqref{Z}. Consider the vector bundle
$F\, =\,\phi_\eta(E)\,=\, E\bigotimes\eta$,
where $\phi_\eta$ is the automorphism
in \eqref{e2}. Then there is a unique isomorphism
of the projective bundle
$$
{\mathbb P}(F)\, \longrightarrow\, X
$$
with ${\mathbb P}(E)\, \longrightarrow\, X$
over the identity map of $X$.
\end{lemma}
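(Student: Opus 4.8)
The statement splits into existence and uniqueness of the claimed isomorphism, and the hypothesis $E\,\notin\,{\mathcal Z}$ will be used only for the uniqueness part. For existence I would simply recall that tensoring by a line bundle leaves the associated projective bundle unchanged: over any $x\,\in\, X$, a line in $F_x\,=\, E_x\otimes\eta_x$ is the same thing as a line in $E_x$ (tensored with the one--dimensional space $\eta_x$), and the resulting bijection between the lines of $E_x$ and those of $F_x$ is independent of the choice of a basis vector of $\eta_x$. Globalizing over $X$ yields a canonical isomorphism of ${\mathbb P}(F)$ with ${\mathbb P}(E)$ over $\text{Id}_X$.

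For uniqueness it suffices to show that the only automorphism of the $X$--scheme ${\mathbb P}(E)$ lying over $\text{Id}_X$ is the identity, since any two isomorphisms ${\mathbb P}(F)\,\longrightarrow\,{\mathbb P}(E)$ over $\text{Id}_X$ differ by such an automorphism. So let $\psi$ be one, and choose a Zariski open cover $\{U_j\}$ of $X$, fine enough that $E$ is trivial on each $U_j$ and that $\psi$ lifts there to some $A_j\,\in\,\text{GL}(E\vert_{U_j})$ (possible because $\text{GL}(E)\,\longrightarrow\,\text{PGL}(E)$ is a $\G_m$--torsor, hence Zariski--locally split). On the overlaps $A_j$ and $A_{j'}$ induce the same automorphism of ${\mathbb P}(E)$, so $A_j\,=\, c_{jj'}A_{j'}$ for a unique $c_{jj'}\,\in\,{\mathcal O}^*_X(U_j\cap U_{j'})$; the cocycle $(c_{jj'})$ defines a line bundle $L$ on $X$, and the $A_j$ glue to an isomorphism $\widetilde\psi\,:\, E\,\longrightarrow\, E\otimes L$ inducing $\psi$. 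Thus $E\,\cong\, E\otimes L$, and applying $\bigwedge^r$ gives $L^{\otimes r}\,\cong\,{\mathcal O}_X$; in particular $\deg L\,=\,0$, so $L\,\in\,\Gamma$. Were $L$ nontrivial, the isomorphism $E\,\cong\, E\otimes L$ would put $E$ in $M^L\,\subset\,{\mathcal Z}$ (see \eqref{e4} and \eqref{Z}), contradicting $E\,\notin\,{\mathcal Z}$. Hence $L\,=\,{\mathcal O}_X$, so $\widetilde\psi$ is an automorphism of $E$; since $E$ is stable and therefore simple (as recalled in the proof of Lemma \ref{lem1}), $\widetilde\psi$ is a nonzero scalar, which acts trivially on ${\mathbb P}(E)$. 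Therefore $\psi\,=\,\text{Id}_{{\mathbb P}(E)}$, and uniqueness follows.

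The only step that is not completely formal is the local--to--global one that turns an $X$--automorphism of ${\mathbb P}(E)$ into an isomorphism $E\,\longrightarrow\, E\otimes L$ for a suitable $L\,\in\,\Pic(X)$; equivalently, in the central extension $1\,\longrightarrow\,\G_m\,\longrightarrow\,\text{GL}(E)\,\longrightarrow\,\text{PGL}(E)\,\longrightarrow\,1$ of group schemes over $X$, the obstruction to lifting a global section of $\text{PGL}(E)$ to $\text{GL}(E)$ lies in $H^1(X,\,\G_m)\,=\,\Pic(X)$. This is standard, and I do not anticipate a genuine obstacle here; once it is in hand, the rest is the short computation above, with the hypothesis $E\,\notin\,{\mathcal Z}$ precisely ruling out the nontrivial possibilities for $L$.
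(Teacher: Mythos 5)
Your proposal is correct and follows essentially the same route as the paper: exhibit the canonical isomorphism ${\mathbb P}(E)\,\cong\,{\mathbb P}(E\otimes\eta)$, and reduce uniqueness to showing ${\mathbb P}(E)$ has no nontrivial automorphism over $X$ by lifting such an automorphism to an isomorphism $E\,\longrightarrow\, E\otimes L$ with $L\,\in\,\Gamma$, then using $E\,\notin\,{\mathcal Z}$ to force $L\,=\,{\mathcal O}_X$ and simplicity of $E$ to conclude. The only difference is cosmetic: you spell out the \v{C}ech/local--to--global lifting step that the paper simply asserts, which is a welcome (and accurate) elaboration.
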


\begin{proof}
There is a natural isomorphism of ${\mathbb P}(E)$
with ${\mathbb P}(E\bigotimes\eta)$. If there are two
distinct isomorphisms of ${\mathbb P}(E)$
with ${\mathbb P}(E\bigotimes L)$, we get a nontrivial
automorphism of ${\mathbb P}(E)$.

Let $f\, :\, {\mathbb P}(E)\, \longrightarrow\,
{\mathbb P}(E)$ be a nontrivial
automorphism. There is a line bundle
$L_0$ and an isomorphism of vector bundles
$$
\widetilde{f}\, :\, E\, \longrightarrow\,
E\otimes L_0
$$
such that $\widetilde{f}$ induces $f$.
Since $\widetilde{f}$ 
induces an isomorphism of $\det E$ with
$\det (E\bigotimes L_0)\, =\, (\det E)\bigotimes
L^{\otimes r}_0$, we conclude that $L_0\, \in\,
\Gamma$. Next note that $L_0$ must be trivial because
$E\, \notin\, {\mathcal Z}$.

The vector bundle
$E$ is simple because it is stable. Hence all
automorphisms
of $E$ induce the identity map of ${\mathbb P}(E)$.
Therefore, ${\mathbb P}(E)$ does not admit a nontrivial
automorphism. This completes the proof of the lemma.
\end{proof}

\begin{prop}\label{prop-n1}
The morphism $\sN(r) \,\longrightarrow\, N(r)$
in \eqref{c1} is an isomorphism in codimension three.
\end{prop}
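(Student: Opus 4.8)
The plan is to reduce to a single connected component and to identify the locus in $N(r)_i$ over which $\sN(r)\to N(r)$ fails to be an isomorphism with the image of the bad locus $\mathcal Z$ of \eqref{Z}. Fix $i\in\Z/r\Z$ and a line bundle $\xi$ with $\deg(\xi)\equiv i\pmod r$. As recalled in the introduction, $N(r)_i$ is the quotient $M/\Gamma$, where $M=M(r,\xi)$ and $\Gamma$ acts by $E\mapsto E\otimes\zeta$; write $q\colon M\to N(r)_i$ for the quotient map, which is a finite morphism since $\Gamma$ is finite, hence closed and dimension-preserving.

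First I would show that $N(r)_i\setminus N^0(r)=q(\mathcal Z)$, with $N^0(r)$ as in \eqref{nu0}. A point of $N(r)_i$ is represented by $\mathbb P(E)$ for some $E\in M$. If $E\notin\mathcal Z$, then $\mathbb P(E)$ admits no nontrivial automorphism by Lemma \ref{lem2}. Conversely, if $E\in\mathcal Z$ then $E\in M^{L_0}$ for some $L_0\in\Gamma\setminus\{\mathcal O_X\}$, and any isomorphism $E\xrightarrow{\sim}E\otimes L_0$ induces an automorphism of $\mathbb P(E)$; this automorphism is nontrivial, for otherwise the isomorphism would be fibrewise scalar, hence of the form $\mathrm{Id}_E\otimes s$ for an isomorphism $s\colon\mathcal O_X\to L_0$, forcing $L_0\cong\mathcal O_X$ (the proof of Lemma \ref{lem2} already shows that such an $L_0$ must lie in $\Gamma$). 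This proves the claimed equality; in particular $q(\mathcal Z)$ is closed, so $N^0(r)$ is open in $N(r)$.

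Next, since $q$ is finite, the codimension of $q(\mathcal Z)$ in $N(r)_i$ equals that of $\mathcal Z$ in $M$, which is at least three by Corollary \ref{cor1}. Finally, the introduction records that over $N^0(r)$ the morphism $\sN(r)\to N(r)$ restricts to the isomorphism $\sN^0(r)\to N^0(r)$. Ranging over the finitely many components, $N(r)\setminus N^0(r)$ is a closed subset of codimension at least three, and $\sN(r)\to N(r)$ is an isomorphism over its complement; this is exactly the assertion of the proposition. The only nonroutine ingredient is Corollary \ref{cor1} (itself resting on Lemma \ref{lem1}), which is already available; the sole point needing a little care is the identification $N(r)_i\setminus N^0(r)=q(\mathcal Z)$, but both inclusions come straight out of Lemma \ref{lem2} and its proof, so no real obstacle is expected.
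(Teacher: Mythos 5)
Your proof is correct and follows essentially the same route as the paper: it identifies the locus where $\sN(r)\to N(r)$ fails to be an isomorphism with the image of $\mathcal Z$ via Lemma \ref{lem2} (and Remark \ref{rem1}), and then invokes Corollary \ref{cor1} together with finiteness of the quotient map to get codimension at least three. The extra detail you supply (the converse inclusion and the explicit dimension bookkeeping) is harmless and merely spells out what the paper leaves implicit.
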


\begin{proof}
Let $i\, \in \, {\mathbb Z}/r{\mathbb Z}$ be the image
of $\text{degree}(\xi)$. In the
proof of Lemma \ref{lem2} we saw that if a projective bundle
$P\, \in\, N(r)_i$ admits a nontrivial automorphism, then
$P$ lies in the image of the subscheme ${\mathcal Z}$ by
the projection $M(r,\xi)\, \longrightarrow\, N(r)_i$. Therefore,
the proposition follows from Corollary \ref{cor1}.
\end{proof}

Let
\begin{equation}\label{U}
{\mathcal U}\, :=\, M \setminus {\mathcal Z}
\end{equation}
be the Zariski open subset, where ${\mathcal Z}$ is constructed
in \eqref{Z}. Let
\begin{equation}\label{a}
{\mathcal P}\, \longrightarrow\, M\times X
\end{equation}
be the universal projective bundle (see \cite{BBGN}).

Fix a point $x\, \in\, X$. Consider the restriction
\begin{equation}\label{P}
{\mathcal P}_x\, :=\, {\mathcal P}\vert_{M\times\{x\}}
\, \longrightarrow\, M\, .
\end{equation}
Let
\begin{equation}\label{P0}
{\mathcal P}^0\, :=\, {\mathcal P}_x\vert_{\mathcal U}
\, \longrightarrow\, {\mathcal U}
\end{equation}
be the restriction of ${\mathcal P}_x$ to the open subset
$\mathcal U$ defined in \eqref{U}.

The action of $\Gamma$ on $M$ (see \eqref{e3})
clearly preserves $\mathcal U$. The resulting action of
$\Gamma$ on $\mathcal U$ and the trivial action of $\Gamma$
on $X$ together define an action of $\Gamma$ on
$\mathcal U\times X$.

Lemma \ref{lem2} has the following corollary:

\begin{corollary}\label{cor2}
There is a unique lift of the action of $\Gamma$ on
$\mathcal U\times X$ to the projective bundle
$$
{\mathcal P}^U\, :=\,
{\mathcal P}\vert_{\mathcal U\times X}\, \longrightarrow\,
\mathcal U\times X\, ,
$$
where ${\mathcal P}$ is defined in \eqref{a}. 
In particular, the projective bundle ${\mathcal P}^0$ in
\eqref{P0} admits a canonical lift of the action of $\Gamma$
on $\mathcal U$, which is obtained by restricting the action
of $\Gamma$ on ${\mathcal P}\vert_{\mathcal U\times X}$.
\end{corollary}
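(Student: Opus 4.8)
The plan is to deduce everything from Lemma \ref{lem2} by a gluing/descent argument. First I would observe what has to be proved: for each $\eta\,\in\,\Gamma$ I need an isomorphism of projective bundles over $\mathcal U\times X$
$$
\widetilde{\phi}_\eta\,:\, {\mathcal P}^U\,\longrightarrow\, (\phi_\eta\times \mathrm{id}_X)^*{\mathcal P}^U\, ,
$$
and these must satisfy the cocycle condition $\widetilde{\phi}_{\eta_1\eta_2}\,=\, (\phi_{\eta_1}\times\mathrm{id})^*\widetilde{\phi}_{\eta_2}\circ\widetilde{\phi}_{\eta_1}$ so that they assemble into an action of $\Gamma$. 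To construct $\widetilde{\phi}_\eta$, I would work fibrewise over points $E\,\in\,\mathcal U$: by the universal property of $\mathcal P$, the fibre of $(\phi_\eta\times\mathrm{id}_X)^*{\mathcal P}^U$ over $\{E\}\times X$ is the projective bundle $\mathbb P(E\otimes\eta)\,\longrightarrow\, X$, while the fibre of ${\mathcal P}^U$ is $\mathbb P(E)\,\longrightarrow\, X$. Lemma \ref{lem2} provides a \emph{unique} isomorphism $\mathbb P(E\otimes\eta)\,\xrightarrow{\ \sim\ }\,\mathbb P(E)$ over $X$. So the candidate map $\widetilde{\phi}_\eta$ exists pointwise and is uniquely determined.

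The main obstacle is promoting this pointwise family of isomorphisms to an honest morphism of schemes (or stacks) over the base $\mathcal U\times X$; this is where "uniqueness" does the real work. The argument I would give: consider the scheme $\mathrm{Isom}_{\mathcal U\times X}\bigl({\mathcal P}^U,\, (\phi_\eta\times\mathrm{id})^*{\mathcal P}^U\bigr)$ parametrizing isomorphisms of the two projective bundles over the identity of $\mathcal U\times X$; because both are $\mathrm{PGL}(r)$-bundles, this is a torsor under $\mathrm{Aut}$ of the projective bundle, hence an affine scheme smooth over $\mathcal U\times X$, and it makes sense to talk about its sections. By Lemma \ref{lem2}, over each point of $\mathcal U$ (pulled back to $\mathcal U\times X$, constant in the $X$ direction) this torsor has a \emph{unique} section along the $X$-slice; since the set-theoretic section is unique, the standard descent/rigidity argument — uniqueness forces Galois-invariance over any field extension and compatibility with specialization, and a section that is unique on geometric fibres of a smooth affine morphism with reduced, irreducible base comes from a global section — yields a genuine morphism $\widetilde{\phi}_\eta\,:\,\mathcal U\times X\,\longrightarrow\, \mathrm{Isom}$, i.e. the desired isomorphism of projective bundles. (Concretely one can also phrase this via the relative automorphism group of $\mathbb P(E)$ over $X$ being trivial, shown in the proof of Lemma \ref{lem2}, so the $\mathrm{Isom}$ scheme is actually a monomorphism to $\mathcal U\times X$ near the section, forcing it to be an isomorphism onto its image and giving the section.)

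Once $\widetilde{\phi}_\eta$ is constructed for each $\eta$, the cocycle condition is automatic: both $\widetilde{\phi}_{\eta_1\eta_2}$ and $(\phi_{\eta_1}\times\mathrm{id})^*\widetilde{\phi}_{\eta_2}\circ\widetilde{\phi}_{\eta_1}$ are isomorphisms ${\mathcal P}^U\,\longrightarrow\,(\phi_{\eta_1\eta_2}\times\mathrm{id})^*{\mathcal P}^U$ over $\mathcal U\times X$, and by the uniqueness clause of Lemma \ref{lem2} applied on each fibre they agree pointwise, hence agree by the same rigidity argument. This gives the lift of the $\Gamma$-action to ${\mathcal P}^U$, and it is the unique such lift inducing the identity on the base, since any two would differ by an automorphism of ${\mathcal P}^U$ over $\mathcal U\times X$ restricting to the identity on each fibre, which by the simplicity/rigidity of $\mathbb P(E)$ proved in Lemma \ref{lem2} is trivial. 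Finally, restricting this action to the slice $\mathcal U\times\{x\}$ produces the asserted canonical lift of the $\Gamma$-action to ${\mathcal P}^0$, completing the proof.
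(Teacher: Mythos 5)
Your overall strategy --- produce, for each $\eta\in\Gamma$, an isomorphism $\widetilde{\phi}_\eta\colon \mathcal{P}^U \to (\phi_\eta\times\mathrm{id})^*\mathcal{P}^U$ from the slicewise isomorphisms of Lemma \ref{lem2}, and then obtain the cocycle condition and the uniqueness of the lift from the uniqueness clause of that lemma --- is exactly what the paper intends (the paper states the corollary with no separate proof, as an immediate consequence of Lemma \ref{lem2}). The cocycle and uniqueness parts of your argument are fine: two morphisms of reduced separated $k$-schemes that agree on all $k$-points agree.

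The gap is in the existence step, where you promote the pointwise data to an isomorphism of families. The rigidity statement you invoke is applied to the wrong fibration: the fibres of $\mathrm{Isom}_{\mathcal{U}\times X}\bigl(\mathcal{P}^U,(\phi_\eta\times\mathrm{id})^*\mathcal{P}^U\bigr)$ over points of $\mathcal{U}\times X$ are torsors under $\mathrm{PGL}(r,k)$, hence of dimension $r^2-1$; they are not singletons, and Lemma \ref{lem2} says nothing about them. The uniqueness in Lemma \ref{lem2} concerns isomorphisms over the whole slices $\{E\}\times X$, i.e.\ sections of that Isom scheme along the fibres of the projection to $\mathcal{U}$; and in any case ``one point in each fibre, therefore a section'' is not a theorem --- producing the section is precisely what has to be proved. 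A correct repair: consider the relative Isom scheme $J\to\mathcal{U}$ whose fibre over $E$ is the set of isomorphisms $\mathbb{P}(E)\to\mathbb{P}(E\otimes\eta)$ over $X$ (it is representable and affine over $\mathcal{U}$); each fibre is a torsor under $\mathrm{Aut}_X(\mathbb{P}(E))$, and this group scheme is trivial, including infinitesimally, because $E$ is stable, hence simple, and $H^0(X,\mathrm{ad}(\mathbb{P}(E)))=0$ by the trace splitting in characteristic zero; therefore $J\to\mathcal{U}$ is \'etale and universally injective with nonempty fibres, hence an isomorphism, and its inverse is the desired $\widetilde{\phi}_\eta$. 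Alternatively, and closer to the spirit of the paper: over $\mathcal{U}$ all the projective bundles have trivial automorphism group (Remark \ref{rem1}), so the substack ${\mathcal N}^0(r)$ coincides with the scheme $N^0(r)$, and families of such bundles correspond, up to unique isomorphism, to their classifying maps; since $\mathcal{P}^U$ and $(\phi_\eta\times\mathrm{id})^*\mathcal{P}^U$ induce the same map $\mathcal{U}\to N^0(r)$ by Lemma \ref{lem2}, they are canonically and uniquely isomorphic, which gives $\widetilde{\phi}_\eta$ directly.
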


\begin{remark}\label{rem1}
{\rm Take any vector bundle $E\, \in\, M^\eta$ with $\eta\, \not=\,
{\mathcal O}_X$. Then the automorphism of ${\mathbb P}(E)$
given by an isomorphism of $E$ with $E\bigotimes\eta$ is
nontrivial. Therefore, a vector bundle $F$ lies in
${\mathcal U}$ if and only if the projective bundle ${\mathbb P}(F)$ 
does not admit any nontrivial automorphism.}
\end{remark}

\section{Moduli space of $\text{PGL}(r,k)$--bundles}\label{sec3}

Define 
\begin{equation}\label{UP}
{\mathcal U}_P\, :=\, {\mathcal U}/\Gamma\, ,
\end{equation}
where ${\mathcal U}$ and $\Gamma$ are defined in \eqref{U} and
\eqref{e1} respectively. Note that $\Gamma$ acts
freely on $\mathcal U$ and thus ${\mathcal U}_P$ 
is an open substack of the component $\sN(r)_i$ of $\sN(r)$, where
$$
\deg{\xi}\,\equiv\, i\, \text{~mod~} \, r\, .
$$
In fact, ${\mathcal U}_P$ is also the moduli stack of
stable $\text{PGL}(r,k)$--bundles over $X$ without
automorphisms and with topological invariant
$\text{degree}(\xi)\, \in\, {\mathbb Z}/r{\mathbb Z}$
(see Remark \ref{rem1}).

We will now recall the descriptions of $\text{Pic}({\mathcal U}_P)$ 
and $\text{Pic}(M(r,\xi))$. Let $\delta$ be the greatest common 
divisor of $r$ and
$\text{degree}(\xi)$. Fix a semistable vector bundle 
$$
F\, \longrightarrow\, X
$$
such that $\text{rank}(F)\, =\, r/\delta$
and
$$
\text{degree}(F)\,=\,\frac{r(g-1)-\text{degree}(\xi)}{\delta}\, .
$$
Let $\widetilde{N}$ be the moduli space of semistable vector
bundles of rank $r^2/\delta$ and degree $r^2(g-1)/\delta$. This
moduli space has the theta divisor $\Theta$ that parametrizes
vector bundles $V$ with $h^0(V)\, \not=\,0$ (from Riemann--Roch, 
$h^0(V)\, =\, h^1(V)$ for all $V\,\in\, \widetilde{N}$). We 
have a morphism
$$
\psi\, :\, M(r,\xi)\, \longrightarrow\, \widetilde{N}
$$
defined by $E\, \longmapsto\, E\bigotimes F$, where $F$ is
the vector bundle fixed above.

The Picard group of $M(r,\xi)$ is isomorphic to $\mathbb Z$,
and the ample generator of $\text{Pic}(M(r,\xi))$ is the
pull back
\begin{equation}\label{pic}
{\mathcal L}_0\, :=\, \psi^*{\mathcal O}_{\widetilde{N}}(\Theta)
\,\longrightarrow\,M(r,\xi)
\end{equation}
(see \cite{DN}).

\begin{remark}\label{retda}
{\rm From Corollary \ref{cor1} it follows that the inclusion of
${\mathcal U}$ in $M(r,\xi)$ (see \eqref{U} for
${\mathcal U}$) induces an isomorphism of Picard groups.
Therefore, ${\rm Pic}({\mathcal U})\,=\, \mathbb Z$, and it
is generated by the restriction of ${\mathcal L}_0$.}
\end{remark}

\begin{lemma}\label{lem3}
Let ${\mathcal U}_P$ be the quotient defined
in \eqref{UP}. Then the image of the homomorphism
$$
{\rm Pic}({\mathcal U}_P)\,
\longrightarrow\,{\rm Pic}({\mathcal U})
$$
induced by the quotient map is generated
by ${\mathcal L}^\delta_0$, where $\delta\,=\,{\rm g.c.d.}
(r\, ,{\rm degree}(\xi))$, and ${\mathcal L}_0$ is the line
bundle in \eqref{pic}.
\end{lemma}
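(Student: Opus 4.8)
The plan is to compare Picard groups via the quotient map $q\colon \sU \to \sU_P = \sU/\Gamma$, using the fact (from Remark \ref{retda}) that $\Pic(\sU) = \Z$ generated by $\sL_0$. Since $\Gamma$ acts freely on $\sU$, the map $q$ is an \'etale $\Gamma$-torsor, and there is an exact sequence coming from equivariant descent: a line bundle on $\sU_P$ is the same as a $\Gamma$-equivariant line bundle on $\sU$. So the image of $q^*\colon \Pic(\sU_P) \to \Pic(\sU)$ consists precisely of those powers $\sL_0^{\otimes n}$ that admit a $\Gamma$-linearization. The task is therefore to determine the smallest positive $n$ for which $\sL_0^{\otimes n}$ is $\Gamma$-linearizable, and to show this $n$ equals $\delta$.

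First I would recall the obstruction theory for linearizations. Since each $\phi_\eta^*\sL_0 \cong \sL_0$ (because $\phi_\eta$ acts on $\Pic(M) = \Z$ trivially, being an automorphism of a variety with $\Pic = \Z$ generated by an ample class), the line bundle $\sL_0$ is $\Gamma$-invariant, and the obstruction to linearizing it lies in $H^2(\Gamma, k^*)$ — indeed this is exactly the class $\nu_i$ defined in the introduction via the central extension $\widetilde{\Gamma}_i$. More generally, $\sL_0^{\otimes n}$ is linearizable if and only if $n\nu_i = 0$ in $H^2(\Gamma, k^*)$, so the smallest such $n$ is the order of $\nu_i$. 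Thus the lemma reduces to the assertion that $\nu_i$ has order exactly $\delta = \gcd(r, \deg\xi)$ in $H^2(\Gamma, k^*)$. For this I would invoke the known structure of the group $\widetilde{\Gamma}_i$: the pairing on $\Gamma \cong (\Z/r\Z)^{2g}$ underlying $\nu_i$ is, up to the relevant identifications, $\delta$ times a nondegenerate symplectic form on $(\Z/(r/\delta)\Z)^{2g}$ arising from the Weil pairing and the action on $\sL_0$. This is precisely the computation behind the statement in the introduction that the gerbe class of $\sM(r,\xi) \to M(r,\xi)$ has order $\delta$ \cite{BBGN, DN}, so I would cite that and extract the order of $\nu_i$ from it.

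The key steps, in order: (1) identify $\Pic(\sU_P)$ with $\Gamma$-equivariant line bundles on $\sU$ using freeness of the action and \'etale descent; (2) observe $\Pic(\sU) = \Z\cdot\sL_0$ with trivial $\Gamma$-action, so the image of $q^*$ is $n_0\Z\cdot\sL_0$ where $n_0$ is the order of the linearization obstruction; (3) identify that obstruction with $\nu_i \in H^2(\Gamma, k^*)$; (4) compute the order of $\nu_i$ to be $\delta$ via the Weil-pairing description, citing \cite{BBGN} and \cite{DN}. The main obstacle I expect is step (4): pinning down that the extension class $\nu_i$ — equivalently the commutator pairing $\Gamma \times \Gamma \to k^*$ it induces — is $\delta$ times a perfect pairing on the quotient $\Gamma/\Gamma[\delta\text{-part}]$, so that its order is exactly $r/\delta$ on... no, exactly $\delta$; care is needed here with which factor appears, and the cleanest route is to deduce it from the already-cited fact that $\sM(r,\xi)\to M(r,\xi)$ is a $\G_m$-gerbe whose class generates a cyclic subgroup of $Br(M(r,\xi))$ of order $\delta$, since that class and $\nu_i$ are governed by the same data. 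A secondary subtlety is making sure the descent argument in step (1) is stated for stacks (so $\sU_P$ is genuinely the quotient stack and $q$ is representable \'etale), which is immediate since $\Gamma$ is finite acting freely.
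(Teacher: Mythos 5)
Your steps (1)--(3) are fine and are exactly the mechanism the paper itself exploits in the proof of Theorem \ref{thm:br}: since $\Gamma$ acts freely on ${\mathcal U}$ and ${\rm Pic}({\mathcal U})\,=\,{\mathbb Z}\cdot{\mathcal L}_0$ with trivial $\Gamma$--action, the image of ${\rm Pic}({\mathcal U}_P)\,\longrightarrow\,{\rm Pic}({\mathcal U})$ is $n_0{\mathbb Z}\cdot{\mathcal L}_0$, where $n_0$ is the order of the linearization obstruction $\nu_i\,\in\,H^2(\Gamma,k^*)$. The genuine gap is at your step (4), which is the entire content of the lemma: you do not prove that the order of $\nu_i$ is $\delta$. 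The fact you propose to cite --- that the $\G_m$--gerbe $\sM(r,\xi)\,\longrightarrow\,M(r,\xi)$ has order $\delta$ in $Br(M(r,\xi))$ \cite{BBGN} --- is the obstruction to a universal vector bundle on $M(r,\xi)\times X$, whereas $\nu_i$ is the obstruction to lifting the $\Gamma$--action to ${\mathcal L}_0$; these are different invariants, and ``governed by the same data'' is an assertion, not an argument. Likewise \cite{DN} only gives ${\rm Pic}(M(r,\xi))\,=\,{\mathbb Z}$ and its ample generator, not the equivariant structure. Your description of the commutator pairing is also garbled: the correct statement (Remark \ref{rem-ex} of the paper) is that $\nu_i$ corresponds to $(r/\delta)$ times the perfect cup--product (Weil) pairing on $\Gamma\,=\,H^1_{et}(X,\mu_r)$, whence its order is $\delta$; what you wrote, ``$\delta$ times a nondegenerate symplectic form on $({\mathbb Z}/(r/\delta){\mathbb Z})^{2g}$,'' is not that, and in any case is not established in your proposal.

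Note also a circularity risk: the paper proves this lemma by a direct citation of the Beauville--Laszlo--Sorger theorem on the Picard group of the moduli of $G$--bundles \cite[p.~184, Theorem]{BLS}, and then \emph{deduces} from the lemma (via the exact sequence \eqref{ec}) that the subgroup generated by $\nu_i$ has order $\delta$; Remark \ref{rem-ex} is stated without proof. So unless you compute the pairing (equivalently the order of $\nu_i$) independently --- which is the nontrivial part --- your argument either has a hole at its crux or silently assumes what the lemma is used to prove. The quickest repair is to replace your step (4) by the \cite{BLS} citation: their theorem gives precisely that the image of the Picard group of the ${\rm PGL}(r,k)$--moduli (equivalently of the $\Gamma$--quotient) in ${\rm Pic}(M(r,\xi))\,=\,{\mathbb Z}\cdot{\mathcal L}_0$ is generated by ${\mathcal L}_0^{\delta}$; alternatively, carry out the theta--group computation of the commutator pairing from scratch.
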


The above lemma follows from \cite[p. 184, Theorem]{BLS}.

The quotient
\begin{equation}\label{a0}
{\mathcal P}^U/\Gamma\, \longrightarrow\,{\mathcal U}_P\times X
\end{equation}
(see Corollary \ref{cor2}) is the universal projective bundle. Let
\begin{equation}\label{PP}
{\mathcal P}^0_P\, :=\, ({\mathcal P}^U/\Gamma)\vert_{
{\mathcal U}_P\times\{x\}}\, \longrightarrow\,{\mathcal U}_P
\end{equation}
be the projective bundle, where $x$
is a fixed point of $X$ as in \eqref{P}. Note that
$$
{\mathcal P}^0_P\, =\,{\mathcal P}^0/\Gamma\, ,
$$
where ${\mathcal P}^0$ is constructed in \eqref{P0}, and
the quotient is for the canonical action of
$\Gamma$ constructed in Corollary \ref{cor2}.

\begin{proof}[Proof of Theorem \ref{thm:br}]
Fix a line bundle $\xi\, \longrightarrow\, X$ such that 
$\deg(\xi)\,\equiv\, i \ {\rm mod} \ r$. 
Let
$$
\pi\,:\,{\mathcal U}\, \longrightarrow\,
{\mathcal U}/\Gamma
$$
be the quotient map (see \eqref{UP}). By Leray 
spectral sequence, and the identification
$$H^1(\Gamma,\, \text{Pic}({\mathcal U}))
\,=\, H^1(\Gamma,\,\Z)\,=\, 0\, ,
$$
we get the following exact sequence:
\begin{equation}\label{ec}
 \Pic({\mathcal U}_P) \, \longrightarrow\, 
\Pic({\mathcal U})^{\Gamma}
\, \stackrel{\varphi}{\longrightarrow}\,
H^2(\Gamma,k^*) \, \longrightarrow\, Br({\mathcal U}_P)\, \longrightarrow\, 
Br({\mathcal U})^{\Gamma}\, .
\end{equation}

The action of $\Gamma$ on $\Pic({\mathcal U})$ is trivial because
the action of $\Gamma$ preserves the ample generator
of $\Pic(\sM(r,\xi))\,=\, \Pic({\mathcal U})$ (see Remark
\ref{retda}). Thus by Lemma \ref{lem3}, the
cokernel of the homomorphism $$\Pic(\sN(r)_i)
\, \longrightarrow\, 
\Pic(\sM(r,\xi))^{\Gamma}$$ is isomorphic to $\Z/\delta$,
where $\delta\,=\, \text{g.c.d.}(r,i)$.

{}From Corollary \ref{cor1} it follows that the inclusion map of
${\mathcal U}$ in $M(r,\xi)$ induces an isomorphism of Brauer groups.
The Brauer group of $M(r,\xi)$ is generated by the class of
the projective bundle ${\mathcal P}^0$ constructed in
\eqref{P0} \cite{BBGN}.
Therefore, by Corollary \ref{cor2}, the homomorphism
$$
Br({\mathcal U}_P)\, \longrightarrow\, Br({\mathcal U})
$$
is surjective.

{}From Corollary \ref{cor1} we know that the inclusion map of
${\mathcal U}_P$ in ${\mathcal N}(r)_i$ induces an isomorphism
of Brauer groups (see also Proposition \ref{prop-n1}).
Thus we get the following exact sequence
\begin{equation}\label{H}
0\, \longrightarrow\, 
\frac{H^2(\Gamma,k^*)}{H} \, \longrightarrow\, Br(\sN(r)_i) 
\,\stackrel{\tau}{\longrightarrow}\, \Z/\delta
\, \longrightarrow\, 0\, ,
\end{equation}
where $H\subset H^2(\Gamma,k^*)$ is the 
subgroup of order
$\delta$. It remains to find the generator of $H$.

Consider the line bundle ${\mathcal L}_0$ in \eqref{pic}.
Let $\widetilde{\Gamma}_i$ be the group of all pairs of the form
$(\zeta\, ,\sigma)$, where $\zeta\, \in\, \Gamma$, and $\sigma$ is an
isomorphism ${\mathcal L}_0\, \longrightarrow\, \phi^*_\zeta{\mathcal 
L}_0$; the map $\phi_\zeta$ is defined in \eqref{e2}.
The group operation on $\widetilde{\Gamma}_i$ is the following:
$$
(\zeta_1\, ,\sigma_1(\phi_\zeta(z)))\cdot (\zeta\, ,\sigma(z))
\,=\,(\zeta_1\otimes\zeta\, , (\sigma_1\circ \sigma)(z))\, ,
$$
where $z\, \in\, M(r,\xi)$. There is a natural projection
$\widetilde{\Gamma}_i\, \longrightarrow\, \Gamma$ that sends
any $(\zeta\, ,\sigma)$ to $\zeta$. Consequently, we have the
central extension
$$
0\, \longrightarrow\, k^*\, \longrightarrow\, \widetilde{\Gamma}_i
\, \longrightarrow\,\Gamma \, \longrightarrow\, 0\, .
$$
Let
\begin{equation}\label{nui}
\nu_i\, \in\, H^2(\Gamma\, ,k^*)
\end{equation}
be the corresponding extension class. First note that
$$
\nu_i\, =\, \varphi({\mathcal L}_0)\, ,
$$
where $\varphi$ is the homomorphism in \eqref{ec}.
Recall that ${\mathcal L}_0$ is the generator of
$\text{Pic}({\mathcal U})$. Hence the subgroup $H$
is \eqref{H} is generated by
$\nu_i$. This completes the proof of the theorem.
\end{proof}

\begin{remark}\label{rem-ex}
{\rm Let $\bigwedge^2 \Gamma$ be the quotient of
$\Gamma\bigotimes_{\mathbb Z}\Gamma$ by the subgroup of
elements of the form $x\otimes y- y\otimes x$, where
$x\, ,y\, \in\, \Gamma$.
The space of all extensions of $\Gamma$ by $k^*$ is
parametrized by $\text{Hom}(\bigwedge^2 \Gamma\, ,k^*)$
(see \cite[pp. 217--218, Theorem 4.4]{Ra}). We have
$\Gamma\,=\, H^1_{et}(X,\, \mu_r)$, and $\text{Hom}
(\bigwedge^2 \Gamma\, ,k^*)\,=\,\text{Hom}
(\bigwedge^2 H^1_{et}(X,\, \mu_r)\, ,\mu_r)$, where $\mu_r$
is the group of $r$--th roots of $1$. The cup product
$$
H^1_{et}(X,\, \mu_r)\otimes H^1_{et}(X,\, \mu_r)
\, \longrightarrow\, H^2_{et}(X,\, \mu_r)\,=\, \mu_r
$$
defines an element $\widehat{\nu}\,\in\, \text{Hom}
(\bigwedge^2 H^1_{et}(X,\, \mu_r)\, ,\mu_r)$. The element
$\nu_i$ in \eqref{nui} coincides with
$(r/\delta)\cdot\widehat{\nu}$.}
\end{remark}

\section{Twisted bundles on a $\mu_r$--gerbe over a curve}\label{sec4}

In this section we prove some results on twisted sheaves on a 
$\mu_r$--gerbe over a curve; these will be required in the proof of 
Theorem \ref{thm:order=r}. We fix the following notation:
\begin{enumerate}
\item[$\bullet$] $K$ is any field of characteristic zero, and 
$\overline{K}$ is an algebraic closure of $K$.
\item[$\bullet$] $X/K$ is a smooth projective geometrically connected 
curve of genus $g\,\geq\, 2$, and having a $K$--point.
\item[$\bullet$] $\sL$ and $\Lambda$ are line bundles on $X$, with 
${\rm degree}(\sL)\,=\,d$ and ${\rm degree}(\Lambda)\,=\,1$.
\item[$\bullet$] $h\,:\,Y\,\longrightarrow\, X$ is a $\mu_r$--gerbe;
if $g\,=\,2$, then we assume that $r\, \geq\, 3$.
\item[$\bullet$] $\sT$ is the moduli stack of stable twisted sheaves on 
$X$ of rank $r$ and determinant $\sL$, and $q\,:\,\sT\,\longrightarrow\, 
T$ is its coarse moduli space.
\item[$\bullet$] $\E\,\longrightarrow\, \sT\times_KY$ is the universal 
twisted bundle.
\item[$\bullet$] $\overline{\sT}\,:=\, \sT\times_K\overline{K}$, and 
$\overline{T}\, :=\,T\times_K\overline{K}$.
\end{enumerate}

Let $\xi$ be a line bundle on $X$
of degree $d$. As before, $\sM(r,\xi)$ will
denote the moduli stack of vector bundles over $X$ of
rank $r$ and determinant $\xi$. 
Let $\pi\,:\,\sM(r,\xi)\times X\, 
\longrightarrow\,\sM(r,\xi)$ denote the natural projection. Let 
$$
\delta\,=\,{\rm g.c.d.}(r,d)\,=\, {\rm g.c.d.}(r,\chi)\, ,
$$
where $\chi\,=\, \chi(\xi)- (r-1)(g-1)$. So $\chi\,=\, \chi(E)$
for any vector bundle $E\,\longrightarrow\, X$ parametrized by 
$\sM(r,\xi)$. Let
$$
{\mathcal E}\, \longrightarrow\, \sM(r,\xi)\times X
$$
be the universal vector bundle. Define the line bundle
\begin{equation}\label{aF}
F \,:=\, 
\det(\pi_*\mathcal E)\otimes 
\det(R^1\pi_*{\mathcal E})^{*}\, \longrightarrow\,\sM(r,\xi)
\end{equation}
be the line bundle.
Let $i_x\,:\,\sM(r,\xi)\,\longrightarrow\, \sM(r,\xi)\times X$ be
the constant section determined by a $K$--point $x\,\in\, X(K)$.

See \cite{DN} for the following theorem.

\begin{theorem}\label{generator}
The line bundle $(F^*)^{\otimes \frac{r}{\delta}}\otimes
(i_x^*\det({\mathcal E}))^{\otimes \frac{\chi}{\delta}}$
descends to a line bundle on the coarse moduli space $M(r,\xi)$,
where $F$ is constructed in \eqref{aF}. Moreover, this
descended line bundle is
the ample generator of $\Pic(M(r,\xi))\,=\, {\mathbb Z}$.
\end{theorem}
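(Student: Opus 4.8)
The plan is to reduce the statement to the description of $\Pic(M(r,\xi))$ that is already available via the Drezet--Narasimhan theory, by checking two things: that the given line bundle on $\sM(r,\xi)$ genuinely descends to the coarse space, and that the descended class is the ample generator. First I would recall why descent is a nontrivial condition on a Deligne--Mumford stack: a line bundle on $\sM(r,\xi)$ descends to $M(r,\xi)$ precisely when, for every point $E$ with nontrivial automorphism group, the induced action of $\mathrm{Aut}(E)$ on the fiber is trivial. For a stable bundle $E$ with $\bigwedge^r E\cong\xi$, the automorphism group is $\mu_r$ (scalars), acting on $\sM(r,\xi)$ as the residual gerbe; so the question is how $\mu_r$ acts on the fiber of $(F^*)^{\otimes r/\delta}\otimes(i_x^*\det\mathcal E)^{\otimes\chi/\delta}$. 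The scalar $t\in\mu_r$ acts on $\mathcal E$ fiberwise by $t$, hence on $\pi_*\mathcal E$ and $R^1\pi_*\mathcal E$ by $t$, so it acts on $F=\det(\pi_*\mathcal E)\otimes\det(R^1\pi_*\mathcal E)^*$ by $t^{h^0-h^1}=t^{\chi}$, and on $i_x^*\det\mathcal E$ by $t^r=1$. Wait---so $i_x^*\det\mathcal E$ already descends, and $F$ descends only after raising to a power killing $t^\chi$; the exponent $r/\delta$ is exactly the order of $t^\chi$ as $t$ ranges over $\mu_r$, since $\gcd(r,\chi)=\delta$. Hence $(F^*)^{\otimes r/\delta}$ descends, and multiplying by $(i_x^*\det\mathcal E)^{\otimes\chi/\delta}$ (which descends on its own) keeps descent. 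This settles the first assertion.

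For the second assertion I would invoke the Drezet--Narasimhan determinant-of-cohomology description of $\Pic(M(r,\xi))\cong\mathbb Z$: the ample generator is the descent of the determinant line bundle $\mathcal D_F$ associated to a vector bundle $F$ on $X$ of rank $r/\delta$ and the degree prescribed in the text so that $\chi(E\otimes F)=0$ for $E\in\sM(r,\xi)$; this is the normalized theta line bundle $\mathcal L_0$ of \eqref{pic}. So it suffices to identify $(F^*)^{\otimes r/\delta}\otimes(i_x^*\det\mathcal E)^{\otimes\chi/\delta}$ with that determinant bundle on the stack, up to the pullback of a line bundle from $M(r,\xi)$ that is trivial---equivalently, to compute the class of our line bundle in $\Pic(\sM(r,\xi))$ and compare. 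I would use the standard exact sequence describing $\Pic(\sM(r,\xi))$: it is generated by the determinant-of-cohomology classes $\lambda(\cdot)$ together with $i_x^*\det\mathcal E$ and $F$, subject to the relations coming from the Grothendieck--Riemann--Roch computation of $\lambda$ in terms of the Künneth components of $c_1(\mathcal E)$ and $c_2(\mathcal E)$. Concretely, for a line bundle $L$ of degree $e$ on $X$, GRR gives $\lambda(\mathcal E\otimes L)$ as a $\mathbb Z$-linear combination of $F$ (the "$c_2$-type" term, independent of $e$), $i_x^*\det\mathcal E$, and the pullback of $\Pic(X)$-data, with the coefficient of $F$ equal to $-(\text{rank of }L)= -1$ up to sign conventions, and the coefficient of $i_x^*\det\mathcal E$ linear in $e$. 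Choosing $L$ (or rather the auxiliary bundle $F$ of rank $r/\delta$ of the right degree) so that the resulting class is the theta bundle, one reads off that the descended theta generator is $(F^*)^{\otimes r/\delta}\otimes(i_x^*\det\mathcal E)^{\otimes\chi/\delta}$ up to a class pulled back from $M(r,\xi)$; but a line bundle pulled back from the coarse space that becomes trivial on the stack is trivial (the pullback $\Pic(M)\to\Pic(\sM)$ is injective since $\sM\to M$ is a gerbe with a section after the relevant twist, or simply because it is surjective onto the subgroup of descending classes with no kernel), so the identification is exact.

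The main obstacle I expect is the bookkeeping in the GRR computation: getting the precise integer coefficients of $F$ and $i_x^*\det\mathcal E$ right, including signs and the role of the Todd class of $X$ and of $R^1\pi_*$ versus $\pi_*$, and checking that the normalization $\chi(E\otimes F)=0$ forces exactly the exponents $r/\delta$ and $\chi/\delta$ rather than some common multiple. A clean way to sidestep most of this is to argue indirectly: by the descent computation above, $(F^*)^{\otimes r/\delta}\otimes(i_x^*\det\mathcal E)^{\otimes\chi/\delta}$ is the \emph{smallest} positive combination of $F^*$ and $i_x^*\det\mathcal E$ that descends (any descending combination $(F^*)^{\otimes a}\otimes(i_x^*\det\mathcal E)^{\otimes b}$ needs $t^{-a\chi}=1$ for all $t\in\mu_r$, i.e. $r\mid a\chi$, i.e. $(r/\delta)\mid a$, and then $b$ is unconstrained mod the descent of $i_x^*\det\mathcal E$), so its descent is a generator of the rank-one group $\Pic(M(r,\xi))$; ampleness then follows because $F^*$ restricted to the fibers of $\psi$ or, more simply, because $\mathcal L_0=\psi^*\mathcal O_{\widetilde N}(\Theta)$ is known to be ample and the two generators of $\mathbb Z$ agree up to sign, and $F^*$ (determinant of cohomology of a bundle with vanishing Euler characteristic, semistability ensuring $h^0$ jumps on an effective divisor) is effective hence the positive generator. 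I would present the descent step in full and then invoke Drezet--Narasimhan and the effectivity of the theta bundle to pin down the sign and ampleness, relegating the explicit GRR coefficients to a remark or a citation.
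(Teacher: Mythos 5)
First, note that the paper itself gives no proof of this statement: it is quoted from Drezet--Narasimhan (``See \cite{DN} for the following theorem''), so any complete argument must essentially reprove their results. Your proposal, read as a standalone argument, has a genuine gap, and it starts with the stabilizer computation. In this paper $\sM(r,\xi)$ is the stack of bundles whose determinant is \emph{abstractly} isomorphic to $\xi$, and the paper explicitly uses that $\sM(r,\xi)\longrightarrow M(r,\xi)$ is a $\G_m$--gerbe: the automorphism group of a stable point is all of $\G_m$, not $\mu_r$. With $\G_m$--stabilizers the descent criterion is that the total $\G_m$--weight vanish; since $F=\det R\pi_*{\mathcal E}$ has weight $\chi$ and $i_x^*\det({\mathcal E})$ has weight $r$, neither $i_x^*\det({\mathcal E})$ nor $(F^*)^{\otimes r/\delta}$ descends by itself (contrary to your claims), and the point of the exponents is exactly that $(a,b)=(r/\delta,\chi/\delta)$ is the minimal solution of $-a\chi+br=0$. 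Your computation, done with $\mu_r$ instead, makes ``$b$ unconstrained'' and leaves $(F^*)^{\otimes r/\delta}$ descending on its own; then your minimality argument cannot single out the stated combination at all, and the implicit picture of a rank--two lattice of descending classes $(a,b)$ is incompatible with $\Pic(M(r,\xi))=\Z$ (it silently assumes $F$ and $i_x^*\det({\mathcal E})$ are independent in $\Pic$ of the Deligne--Mumford fixed-determinant stack, which is false: there $\Pic\cong\Z$ by \cite{BLS}).

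Even after correcting the weight computation, the second half of your argument is not yet a proof. To conclude that the weight-zero class you exhibit generates $\Pic(M(r,\xi))$ you must know that $\Pic(\sM(r,\xi))$ is generated by $F$ and $i_x^*\det({\mathcal E})$ (equivalently, you must carry out the Grothendieck--Riemann--Roch/determinant-of-cohomology analysis you defer to ``a remark or a citation''), and you must separately identify the descended class, up to sign, with the theta bundle ${\mathcal L}_0=\psi^*{\mathcal O}_{\widetilde N}(\Theta)$ to get ampleness. These are precisely the contents of \cite{DN}, so your plan amounts to citing the theorem being proved. As a reduction of the statement to known results your outline is reasonable, but the descent step as written is wrong in the paper's setting, and the identification of the generator is asserted rather than proved.
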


\begin{prop}\label{prop-l}
Using the above notation:
\begin{enumerate}
\item[(i)]$\overline{\sT}$ is non--canonically isomorphic to the moduli 
stack of stable vector bundles on $X\times_K\overline{K}$ of rank $r$ 
and 
determinant $\zeta$ for some fixed line bundle $\zeta$. Further, if 
$$Y\times_K\overline{K}
\, \longrightarrow\, X\times_K\overline{K}$$ is a neutral 
$\mu_r$--gerbe, 
then ${\rm degree}(\zeta)\,\equiv\, d \ {\rm mod}\ r$ for any such 
$\zeta$.
\item[(ii)]$\Pic(T)\,=\,\Pic(\overline{T})\,=\,\Z$. 
\item[(iii)] The natural homomorphism $Br(K)\,\longrightarrow\, Br(T)$ 
is injective.
\end{enumerate}
\end{prop}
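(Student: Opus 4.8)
The plan is to prove the three parts in turn, reducing (ii) and (iii) after base change to $\overline K$ to the Picard computation for an ordinary moduli space of bundles with fixed determinant, and exploiting that the universal twisted bundle $\E$ is defined over $K$. For (i): over $\overline K$ the curve $X\times_K\overline K$ has trivial Brauer group by Tsen's theorem, so the $\G_m$--gerbe associated to the $\mu_r$--gerbe $h\colon Y\times_K\overline K\to X\times_K\overline K$ is trivial; equivalently there is a twisted line bundle $L$ of weight one on $Y\times_K\overline K$. Tensoring by $L^{-1}$ carries weight--one twisted sheaves to weight--zero sheaves, i.e.\ to pullbacks of ordinary sheaves on $X\times_K\overline K$; it preserves rank, preserves slopes (hence stability and the poset of subsheaves), and sends a rank--$r$ twisted sheaf of determinant $h^{*}\sL$ to a rank--$r$ bundle of determinant $\zeta:=\sL\otimes M^{-1}$, where $M\in\Pic(X\times_K\overline K)$ is the line bundle with $h^{*}M\cong L^{\otimes r}$ (the bundle $L^{\otimes r}$ has weight $r\equiv 0$, hence descends along $h$). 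Carried out in families this gives an isomorphism of stacks $\overline\sT\xrightarrow{\ \sim\ }\sM(r,\zeta)\times_K\overline K$, compatibly with universal bundles. It is non--canonical because $L$ is determined only up to twisting by $\Pic(X\times_K\overline K)$: replacing $L$ by $L\otimes h^{*}N$ replaces $\zeta$ by $\zeta\otimes N^{\otimes(-r)}$, so $\deg\zeta$ is well defined modulo $r$. When $Y\times_K\overline K$ is neutral one may take it to be $(X\times_K\overline K)\times B\mu_r$ and pick $L$ to be the tautological character bundle, so that $L^{\otimes r}$ is trivial, $M=\mathcal O$, and $\deg\zeta=d$; hence $\deg\zeta\equiv d\pmod r$ for every such $\zeta$.

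For (ii): by (i), $\overline T\cong M(r,\zeta)\times_K\overline K$, so $\Pic(\overline T)=\Z$ is generated by an ample class, by \cite{DN} (see also Theorem \ref{generator}). Under the identification of (i), $\overline T$ is the stable locus inside the projective normal variety of semistable bundles of rank $r$ and determinant $\zeta$, whose strictly semistable part has codimension at least two by the standing hypothesis on $(g,r)$; hence regular functions extend and $H^0(\overline T,\G_m)=\overline K^{*}$. Writing $G_K=\Gal(\overline K/K)$, the low--degree exact sequence of the Hochschild--Serre spectral sequence for $T\to\Spec K$, together with Hilbert 90, reads
\[
0\to\Pic(T)\to\Pic(\overline T)^{G_K}\xrightarrow{\ \partial\ }H^2(G_K,\overline K^{*})\to\ker\!\left(H^2_{et}(T,\G_m)\to H^2_{et}(\overline T,\G_m)\right)\to H^1(G_K,\Pic(\overline T))\,.
\]
The action of $G_K$ on $\Pic(\overline T)=\Z$ fixes the ample generator, hence is trivial, so $\Pic(\overline T)^{G_K}=\Z$ and $H^1(G_K,\Pic(\overline T))=0$ (a continuous homomorphism from a profinite group to $\Z$ is zero). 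Thus $\Pic(T)=\Z$ will follow once $\partial=0$, i.e.\ once the ample generator of $\Pic(\overline T)$ is shown to descend to $T$.

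This descent is the step I expect to be the main obstacle. I would obtain it by producing over $K$ a line bundle on $\sT$ descending to $T$ whose pullback to $\overline T$ is the ample generator, adapting the determinant--of--cohomology construction of Theorem \ref{generator} to the universal twisted bundle $\E\to\sT\times_K Y$. Since $\E$ has weight one along $Y\to X$, its direct image to $\sT\times_K X$ vanishes and the literal formula of Theorem \ref{generator} is not available; instead I would fix a twisted bundle $\mathcal F$ on $Y$ of weight $-1$ and rank $r/\delta$ (its existence over $K$ uses that the Brauer class of the $\G_m$--gerbe of $Y$ is killed by $r$, with the degree--one line bundle $\Lambda$ used to fix its degree), form $\E\otimes\mathrm{pr}_Y^{*}\mathcal F$ — now of weight zero, hence pulled back from an honest rank--$(r^2/\delta)$ family on $\sT\times_K X$ — and take the theta line bundle of this family combined with $i_{x}^{*}\det$ of it at a $K$--point $x$ of $X$, in the exact combination prescribed by Theorem \ref{generator}. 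By the shape of that combination the result has vanishing $\G_m$--weight for $\sT\to T$, so it descends to $T$, and over $\overline K$, by (i), it is the Dr\'ezet--Narasimhan line bundle, i.e.\ the ample generator of $\Pic(\overline T)$. The real work lies in producing such an $\mathcal F$ over $K$ with the correct numerical invariants and in verifying primitivity; alternatively, if a single $\mathcal F$ is awkward, one can exhibit line bundles over $K$ descending to two coprime powers of the generator and deduce $\partial=0$ from $\partial$ being a homomorphism $\Z\to H^2(G_K,\overline K^{*})$.

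For (iii): once (ii) gives $\Pic(T)\xrightarrow{\ \sim\ }\Pic(\overline T)^{G_K}$, the map $\partial$ above vanishes, so the displayed exact sequence shows that the structure morphism induces an isomorphism $Br(K)=H^2(G_K,\overline K^{*})\xrightarrow{\ \sim\ }\ker\!\left(H^2_{et}(T,\G_m)\to H^2_{et}(\overline T,\G_m)\right)$; in particular $Br(K)\to H^2_{et}(T,\G_m)$ is injective. As $Br(T)$ injects into $H^2_{et}(T,\G_m)$, the homomorphism $Br(K)\to Br(T)$ is injective, which is (iii).
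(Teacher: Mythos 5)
Your part (i) is a correct direct argument (Tsen plus untwisting by a weight--one twisted line bundle); the paper simply quotes Lieblich \cite[3.1.2.1]{lieblich} for this. Your reduction of (iii) to (ii) via the low--degree Hochschild--Serre sequence is exactly the paper's argument. The problem is part (ii), and it is a genuine gap, one you yourself flag: everything hinges on showing that the ample generator of $\Pic(\overline{T})\,=\,\Z$ descends to $T$ over $K$ (equivalently $\partial\,=\,0$), and this is precisely the step you leave as ``the main obstacle'' and ``the real work''. A proof that stops at ``I would obtain it by \dots'' does not establish $\Pic(T)\,=\,\Z$, and since (iii) is deduced from (ii), neither (ii) nor (iii) is actually proved in your proposal.

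Worse, the specific route you sketch would fail in the very situation where the proposition is used. You propose to untwist $\E$ by a twisted bundle $\mathcal F$ on $Y$ of weight $-1$ and rank $r/\delta$ defined over $K$. But any locally free twisted sheaf on $Y$ of weight $\pm 1$ has rank divisible by the index of the Brauer class of the gerbe over the function field of $X_K$. In the application in Section \ref{sec5}, the gerbe is pulled back from a class $\beta\,\in\, Br(K)$ represented by a division algebra of degree $r$, and since $X$ has a $K$--point the index of its pullback is still $r$; hence no twisted bundle of rank $r/\delta$ exists over $K$ once $\delta\,>\,1$ (e.g.\ $d\,\equiv\, 0$ mod $r$). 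So ``producing such an $\mathcal F$ over $K$ with the correct numerical invariants'' is not a technical verification but an impossibility in general, and one must instead argue with twisted bundles of rank divisible by $r$ (or with several determinantal bundles giving coprime multiples of the generator, the fallback you mention but do not carry out), keeping track of which power of the generator the resulting $K$--rational line bundle hits. For comparison, the paper's own proof of (ii) is terse: it invokes (i) together with the explicit Drezet--Narasimhan description of the generator in Theorem \ref{generator}, i.e.\ the line bundle $(F^*)^{\otimes r/\delta}\otimes (i_x^*\det({\mathcal E}))^{\otimes \chi/\delta}$ built from the universal object, which is available over $K$; your attempt to spell this out in the twisted setting is where the argument breaks down, so as written the proposal does not close the case $\delta\,>\,1$.
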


\begin{proof}
$(i)$: This is proved in \cite[3.1.2.1]{lieblich}.

$(ii)$: This follows from statement (i) and Theorem \ref{generator}.

$(iii)$: This follows immediately from statement (ii) and the 
exactness of the following sequence:
$$ 0\,\longrightarrow\, \Pic(T)\,\longrightarrow\, 
\Pic(\overline{T})^{\Gal(K)}\,\longrightarrow\,
Br(K) \,\longrightarrow\, Br(T)\, .$$
This completes the proof of the proposition.
\end{proof}

\begin{prop}\label{prop:ordern}
Assume the $\mu_r$--gerbe $Y\,\longrightarrow\, X$ is a pull back of a 
$\mu_r$--gerbe on $\Spec(K)$ which
defines a class of order $r$ in $Br(K)$. For any 
$K$--point $x$ in $X$, let $$i_x\,:\, T\,\longrightarrow\, T\times_KX$$ 
denote the corresponding section. Then the following two
statements hold.
\begin{enumerate}
\item[(i)]The class in $Br(T)$ defined by the $\G_m$--gerbe 
$\sT\,\longrightarrow\, 
T$ has order exactly equal to $\delta\,=\, {\rm g.c.d.}(r,d)$. 
\item[(ii)]The order of $i_x^*({\sE}nd(\E))$ is exactly equal to $r$. 
\end{enumerate}
\end{prop}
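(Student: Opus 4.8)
The plan is to deduce Proposition~\ref{prop:ordern} from the results already assembled, treating the two parts in tandem since they are closely linked. First I would spell out what changes when we base change to $\overline{K}$. By Proposition~\ref{prop-l}(i), the gerbe $Y\times_K\overline{K}\,\longrightarrow\, X\times_K\overline{K}$ becomes neutral, so $\overline{\sT}$ is identified with $\sM(r,\zeta)$ for a line bundle $\zeta$ of degree $\equiv d \bmod r$, and under this identification $\overline{T}\,=\,M(r,\zeta)$. The $\G_m$-gerbe $\overline{\sT}\,\longrightarrow\,\overline{T}$ then has class in $Br(\overline{T})$ of order exactly $\delta\,=\,{\rm g.c.d.}(r,d)$ by the description of $Br(M(r,\zeta))$ recalled in the introduction (the class is a generator of order ${\rm gcd}(r,\deg\zeta)\,=\,\delta$). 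Likewise, over $\overline{K}$ the twisted sheaf $\E$ becomes an honest universal vector bundle, so $i_x^*(\sE nd(\E))$ becomes $i_x^*(\sE nd(\mathcal E))$ for the universal bundle on $\sM(r,\zeta)\times X$, which (again by \cite{BBGN}) has class of order exactly $r$ in $Br(M(r,\zeta))$.

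Next I would descend these order computations from $\overline{K}$ to $K$. The key input is Proposition~\ref{prop-l}(iii): $Br(K)\,\hookrightarrow\, Br(T)$, combined with the functoriality of the Brauer group and the exact sequence relating $Br(T)$, $Br(\overline{T})^{\Gal(K)}$ and $Br(K)$ (which follows from $\Pic(T)=\Pic(\overline{T})$ via the Hochschild--Serre / low-degree exact sequence). Because the order of a Brauer class can only drop under base change and because any class in the kernel of $Br(T)\,\longrightarrow\, Br(\overline T)$ comes from $Br(K)$, it suffices to pin down how the relevant classes interact with the image of $Br(K)$. Here the hypothesis that $Y\,\longrightarrow\, X$ is pulled back from a $\mu_r$-gerbe on $\Spec(K)$ of order exactly $r$ in $Br(K)$ is essential: it tells us that the ``obstruction to untwisting'' is governed by a fixed class $\beta\in Br(K)$ of order $r$, and one reads off that the class of $\sE nd(\E)$ restricted through $i_x$ differs from its $\overline{K}$-avatar by a multiple of $\beta$, while the $\G_m$-gerbe class $[\sT\to T]$ differs from its $\overline{K}$-avatar by $(r/\delta)$-times a related class (so that over $\overline K$ it has order $\delta$). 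Using injectivity of $Br(K)\to Br(T)$, the order over $K$ cannot be smaller than the order over $\overline{K}$ of the ``untwisted part,'' and one checks the contributions from $Br(K)$ do not lower it.

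Concretely, for part (i) I would argue that $\delta\cdot[\sT\to T]$ lies in the image of $Br(K)\to Br(T)$ (since $\delta$-times the class dies over $\overline K$) and, conversely, is nonzero unless forced to be by a smaller multiple; the precise order is then extracted by comparing with the order-$r$ class $\beta$ and noting $\delta\mid r$. For part (ii), the class $i_x^*(\sE nd(\E))$ has order dividing $r$ (it is a $\mathrm{PGL}(r)$-bundle class), becomes order exactly $r$ over $\overline K$, and again injectivity of $Br(K)\to Br(T)$ forces the order over $K$ to be exactly $r$. The cleanest organization is: (a) base change and cite \cite{BBGN} for the $\overline{K}$ statements; (b) invoke Proposition~\ref{prop-l}(ii)--(iii) to get the Hochschild--Serre sequence with $\Pic$ terms killed appropriately; (c) use the order-$r$ hypothesis on $\beta$ to control the $Br(K)$-contributions; (d) conclude.

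The main obstacle I anticipate is step (c): making precise the relationship between the class of the $\G_m$-gerbe $\sT\,\longrightarrow\, T$ over $K$ and the class $\beta\in Br(K)$ of the defining $\mu_r$-gerbe on $\Spec(K)$, i.e.\ showing that the ``twisting discrepancy'' between $[\sT\to T]$ and the untwisted class pulled back from $M(r,\zeta)$ is exactly $(r/\delta)\,h^*\beta$ (or the analogous statement for $\sE nd(\E)$, with multiplier $1$). This is essentially a compatibility/obstruction-theory computation for moduli of twisted sheaves — it should follow from Lieblich's comparison \cite{lieblich} together with the way $i_x$ interacts with the gerbe structure — but it requires care to state correctly, and it is where the hypothesis ``order exactly $r$ in $Br(K)$'' is genuinely used rather than just ``order dividing $r$.'' Once that discrepancy is identified, parts (i) and (ii) follow formally from the injectivity in Proposition~\ref{prop-l}(iii) and the fact that orders do not increase under base change.
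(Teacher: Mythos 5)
Your argument for part (ii) rests on a false base-change statement: over $\overline{K}$ the class $i_x^*({\sE}nd(\E))$ does \emph{not} have order $r$ in general. After base change $\E$ becomes the universal bundle on $\sM(r,\zeta)\times X$, and by \cite{BBGN} the class of $i_x^*({\sE}nd(\mathcal E))$ in $Br(M(r,\zeta))$ is the gerbe class of $\sM(r,\zeta)\rightarrow M(r,\zeta)$, whose order is $\gcd(r,\deg\zeta)=\delta$, not $r$ (they coincide only when $\delta=r$). So you cannot get the lower bound $r$ for the order over $K$ by "order can only drop under base change"; the whole point of the proposition is that the order \emph{jumps} from $\delta$ over $\overline K$ to $r$ over $K$ because of the arithmetic twist. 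The correct mechanism, which is the paper's proof, is the decomposition $[i_x^*({\sE}nd(\E))]=\gamma+\beta$ in $Br(T)$, where $\gamma$ is the class of the $\G_m$--gerbe $\sT\rightarrow T$ and $\beta$ is the image of the order-$r$ class in $Br(K)$: if $a(\gamma+\beta)=0$, then passing to $\overline K$ kills $\beta$ and forces $\delta\mid a$, whence $a\gamma=0$ by part (i), whence $a\beta=0$, and injectivity of $Br(K)\rightarrow Br(T)$ (Proposition \ref{prop-l}(iii)) gives $r\mid a$. Your appeal to injectivity is in the right spirit, but without part (i) and the explicit $\gamma+\beta$ decomposition the argument does not close.

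For part (i) you correctly get the lower bound $\delta$ from $\overline K$, but the upper bound $\delta\gamma=0$ is exactly the step you leave as an unproven "discrepancy" formula $[\sT\to T]=[\text{untwisted class}]+(r/\delta)h^*\beta$; this is not even well posed over $K$, since the identification $\overline\sT\cong\sM(r,\zeta)$ is non-canonical and there is no map $T\rightarrow M(r,\zeta)$ over $K$ to pull an "untwisted class" back along. The paper avoids any such comparison: it first shows $r\gamma=0$ (from $r\beta=0$ and $r(\gamma+\beta)=0$, the latter because $\gamma+\beta$ is represented by a rank-$r^2$ Azumaya algebra), and then kills $\delta\gamma$ by a geometric construction — choose a twisted bundle $\F$ on $Y$ of rank $r^m$ and trivial determinant so that $\sH=\E\boxtimes\F^*$ descends to $\sT\times X$, tensor with $\Lambda^{m}$ to kill higher direct images, and push forward to $\sT$ to obtain a $\gamma$-twisted vector bundle of rank $d+rm+(r^m+r)(1-g)\equiv d\ (\mathrm{mod}\ r)$; hence $\gamma$ is annihilated by both $r$ and an integer congruent to $d$ mod $r$, so by $\delta$. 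This pushforward construction is the essential missing idea in your proposal, and without it (or a genuine proof of your discrepancy formula) the proof of (i), and hence of (ii), is incomplete.
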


\begin{proof}
Let $\gamma$ denote the class of $\sT\,\longrightarrow\, T$ in $Br(T)$. 
Let us first show 
that $\gamma$ is annihilated by $r$. Consider the Azumaya algebra ${\sE} 
nd(\E)$ on $T\times_KX$. The class of $i_x^*({\sE}nd(\E))$ in $Br(T)$ 
is 
$\gamma+\beta$ where $\beta\,\in\, Br(T)$ is the image,
by the natural homomorphism $Br(K)\,\longrightarrow\, Br(\sT)$,
of the class in $Br(K)$ defined by the restriction of 
$Y\,\longrightarrow\, X$ 
to $x$. We have $r\cdot 
\beta\,=\,0$ because $\beta$ is associated to
a $\mu_r$--gerbe. Moreover, $r\cdot(\gamma+\beta)\,=\,0$ since 
$\gamma+\beta$ is represented by an Azumaya algebra of rank $r^2$. Thus 
$r\cdot \gamma\,=\,0$.

Proof of $(i)$: Note that after base field extension to $\overline{K}$, 
the 
image of 
$\gamma$ in $Br(\overline{T})$ is precisely $\delta$. Thus it is 
sufficient 
to show $\delta\cdot \gamma\,=\,0$. There exists an integer $m$ and a 
twisted 
vector bundle $\F$ on $Y$ of rank $r^m$ and trivial determinant. 
Consider the vector bundle $$\sH \,=\,\E \boxtimes \F^{*}$$ on 
$\sT\times Y$. This vector bundle descends to a vector bundle on 
$\sT\times X$. Also note that the class of $i_x^*{\sE} nd(\sH)$ in 
$Br(T)$ 
is precisely $\gamma$. Let $\pi\,:\,\sT\times X \,\longrightarrow\, \sT$ 
denote the natural 
projection. $\sH$ can be thought of as a family of vector bundles on 
$X$ parametrized by $\sT$. Since this family is bounded, there exists 
an integer $m_0$ such that for all $m\, \geq\, m_0$,
$$
\sH\boxtimes \Lambda^{m}
$$
has no higher cohomology on fibers of $\pi$. The vector bundle
$$ \pi_*\left(\sH\boxtimes\Lambda^{m}\right)$$
is a twisted vector bundle on $\sT$ of rank
$$ d+rm+(r^m+r)\cdot(1-g)\, .$$
Thus $\gamma$ is annihilated by $d+rm+(r^m+r)\cdot(1-g)$. We already 
know that it is annihilated by $r$. This proves $\delta\cdot 
\gamma\,=\,0$.

Proof of $(ii)$: Let $a(\gamma+\beta)\,=\,0$. We will show $a$ is 
divisible by $r$. 
Since the image of $\gamma$ in $Br(\overline{T})$ has order $\delta$ and 
image of $\beta$ in $Br(\overline{T})$ is zero, it follows that $a$ is 
divisible by $\delta$. Thus we get
\begin{equation}\label{x}
a\cdot \beta \,=\, 0\, .
\end{equation}
By Proposition \ref{prop-l}(iii), the order of the class $\beta$
is $r$. Hence \eqref{x} implies that $a$ is divisible by $r$.
This completes the proof of the proposition.
\end{proof}

\section{Proof of Theorem \ref{thm:order=r}}\label{sec5}

\begin{proof}[Proof of $(i)$:] Let
${\mathcal E}$ be the universal vector bundle over
${\mathcal M}(r,\xi)\times X$. For any point $x\, \in\, X$,
let ${\mathcal E}_x\, \longrightarrow\, {\mathcal M}(r,\xi)$
be the vector bundle obtained by restricting ${\mathcal E}$ 
to ${\mathcal M}(r,\xi)\times \{x\}$.
Take two points $x\, ,y\, \in\, X$. Note that the multiplicative
action of $k^*$ on ${\mathcal E}$ induces a trivial action
of $k^*$ on ${\mathcal E}^*_x\bigotimes {\mathcal E}_y$.
Therefore, ${\mathcal E}^*_x\bigotimes {\mathcal E}_y$ descends to
${M}(r,\xi)$. This descended vector bundle on
${M}(r,\xi)$ will also be denoted by ${\mathcal E}^*_x\bigotimes 
{\mathcal E}_y$. Next we note that the action of
$\Gamma$ on ${\mathcal M}(r,\xi)$ lifts to an action of $\Gamma$ on
the vector bundle ${\mathcal E}^*_x\bigotimes {\mathcal E}_y$. Let
${\mathcal W}$ denote the vector bundle on ${\mathcal U}_P$
defined by the $\Gamma$--equivariant vector bundle ${\mathcal 
E}^*_x\bigotimes {\mathcal E}_y$. It is easy to see
that the $\text{PGL}(r^3,k)$--bundle ${\mathcal P}^0_P\bigotimes
{\mathbb P}({\mathcal W})$, where ${\mathcal P}^0_P$ is constructed
in \eqref{PP}, is isomorphic to ${\mathcal P}^{y,0}_P\bigotimes
{\mathbb P}(\mathcal{E}nd({\mathcal E}_x))$, where ${\mathcal 
P}^{y,0}_P$
is the projective bundle obtained by substituting $x$ with $y$
in the construction of ${\mathcal P}^0_P$. Consequently,
the class $\alpha$ in Theorem \ref{thm:order=r} is independent
of $x$. From \cite{BBGN} we know that $\alpha$ maps to
the generator of $Br({\mathcal M}(r,\xi))$. Hence $\alpha$
maps to $1\,\in\, \Z/\delta\Z$ in Theorem \ref{thm:br}.

Proof of $(ii)$: Let $\sN(r)_i$ be a connected component of $\sN(r)$ for 
$i\in\Z/r$. To prove the theorem it is enough to construct a $k$--scheme 
$T$ together with an Azumaya algebra $\sB$ on $T\times_kX$ such that 
\begin{enumerate}
 \item[(i)] $\sB$ gives a family of stable ${\rm PGL}(r,k)$ bundles on 
$X$ 
lying in component $\sN(r)_i$, and 
 \item[(ii)] if $i_x\,:\,T\,\longrightarrow\, T\times_k X$ is the 
section given 
by the point $x$, then $i_x^*\sB$ has order precisely $r$ in $Br(T)$. 
\end{enumerate}

We carry out this construction below.

We first claim that there is a field extension
$K/k$ such that $Br(K)$ contains an element
$\beta$ of order $r$. To prove this, take the purely transcendental 
extension
$K\,=\,k(x,y)$, and define $\beta$ to be the class of the cyclic algebra 
$(x,y)_{\zeta}$,
where $\zeta$ is a primitive $r$--th root of unity.
Note that $(x,y)_{\zeta}$ is a division algebra.

In an earlier version, we had a very long argument for the
existence of $K$ and $\beta$. The above short argument was
provided by the referee. 

Define $X_K\,:=\, X\times_kK$. 
Let $\beta'$ denote the pullback of $\beta$ in $Br(X_K)$. Since
$\beta'$ is of order $r$, there is $\mu_r$--gerbe
$$Y\,\longrightarrow\, X_K$$
representing the class $\beta'$. We fix the following
notation:
\begin{enumerate}
 \item[$\bullet$]$\sT$ be the moduli stack of stable twisted rank $r$ 
vector bundles on $Y$ with determinant $\sL$, where  
$\deg(\sL)\,\equiv\, i ~{\rm mod}~ r$.
\item[$\bullet$] Let $\E$ be the universal bundle on $\sT\times_KY$.
\item[$\bullet$] Let $\sT\,\longrightarrow\, T$ be the coarse moduli 
space of $\sT$. 
\end{enumerate}

Now ${\sE} nd(\E)$ descends to an Azumaya algebra on $T\times_kX$. Since 
this is a stable family, we get a map $T\,\longrightarrow \,\sN(r)_i$.
The proof now follows from Proposition \ref{prop:ordern}, since order of
$i_x^*{\sE} nd(\E)$ in $Br(T)$ is precisely $r$. This completes the
proof of the theorem.
\end{proof}

Let $S$ be a smooth variety defined over $k$, with $\dim S\, \geq\, 
1$. Let
$$
{\mathbb P}_S\, \longrightarrow\, S
$$
be a projective bundle of relative dimension $r-1$. So
${\mathbb P}_S$ defines an algebraic principal
$\text{PGL}(r,k)$--bundle
\begin{equation}\label{a-1}
E_{\text{PGL}(r,k)}\, \longrightarrow\, S\, .
\end{equation}
Let
\begin{equation}\label{a-beta}
\beta\, \in\, Br(S)
\end{equation}
be the class defined by ${\mathbb P}_S$.
If the order of $\beta$ is $r$, then $E_{\text{PGL}(r,k)}$
does not admit any reduction of structure group to any proper
parabolic subgroup of $\text{PGL}(r,k)$ over any
nonempty open subset of $S$ (see \cite[p. 267, Lemma 2.1]{BBGN}).

Suppose there is a an irreducible normal projective
variety $\overline{S}$ over $k$ such that $S$ is the smooth locus of
$\overline{S}$. Fix a polarization on $\overline{S}$. Assume
that the order of $\beta$ in \eqref{a-beta} is $r$. Since
$E_{\text{PGL}(r,k)}$
does not admit any reduction of structure group to any proper
parabolic subgroup of $\text{PGL}(r,k)$ over any
nonempty open subset of $S$, the principal 
$\text{PGL}(r,k)$--bundle in \eqref{a-1} is stable.

Consider $N^0(r)$ defined in \eqref{nu0}.
Let
$$
F_{\text{PGL}(r,k)}\, \longrightarrow\, N^0(r)
$$
be the principal $\text{PGL}(r,k)$--bundle
obtained by pulling back the universal projective
bundle using the map $i_x$ in Theorem \ref{thm:order=r}
(recall that $N^0(r)$ is isomorphic to ${\sN}^0(r)$). From
the second part of Theorem \ref{thm:order=r}
(combined with Proposition \ref{thm:cod2}) we have
the following corollary:

\begin{corollary}\label{corf}
The principal ${\rm PGL}(r,k)$--bundle $F_{{\rm PGL}(r,k)}\,
\longrightarrow\, N^0(r)$ is stable.
\end{corollary}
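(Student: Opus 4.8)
The plan is to deduce Corollary \ref{corf} directly from the machinery already assembled in the paragraph immediately preceding it, together with Theorem \ref{thm:order=r}(ii) and Proposition \ref{thm:cod2}. First I would take $\overline{S}\,:=\,N^0(r)^{\text{closure}}$ inside a connected component $N(r)_i$ — more precisely, one works component by component and sets $\overline{S}$ to be a normal projective variety whose smooth locus is (an open subset containing) $N^0(r)_i$; since $N(r)_i$ is a normal quasiprojective variety and $N^0(r)_i$ is its locus of points without nontrivial automorphisms, after fixing a polarization the hypotheses of the displayed paragraph are met. Then I would set $S\,=\,N^0(r)_i$ and take ${\mathbb P}_S$ to be the projective bundle underlying $F_{{\rm PGL}(r,k)}$, so that the class $\beta\,\in\, Br(S)$ of \eqref{a-beta} is precisely the restriction to $N^0(r)_i$ of the class $\alpha$ of Theorem \ref{thm:order=r}.

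The next step is to invoke Theorem \ref{thm:order=r}(ii): the order of $\alpha$ restricted to each connected component of $\sN(r)$ is exactly $r$, and under the identification ${\sN}^0(r)\,=\,N^0(r)$ the restriction of $\alpha$ to $N^0(r)_i$ has order dividing $r$; I need it to be exactly $r$. For this I would argue that restricting from the whole component $\sN(r)_i$ to the open substack ${\sN}^0(r)_i$ cannot decrease the order: by Proposition \ref{thm:cod2} the morphism $\sN(r)\,\to\,N(r)$ is an isomorphism outside codimension three, and in particular $Br({\sN}(r)_i)\,\hookrightarrow\, Br({\sN}^0(r)_i)$ is injective (a Brauer class on a smooth variety is determined by its restriction to any dense open subset whose complement has codimension $\geq 2$). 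Hence the order of $\alpha|_{N^0(r)_i}$ equals the order of $\alpha|_{\sN(r)_i}$, which is $r$.

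Finally, with $\beta\,=\,\alpha|_{N^0(r)_i}$ of order exactly $r$, the cited Lemma \ref{lem2} material — i.e.\ \cite[p.~267, Lemma~2.1]{BBGN} quoted in the displayed paragraph — says that $E_{{\rm PGL}(r,k)}$ admits no reduction of structure group to a proper parabolic over any nonempty open subset of $S$, and the displayed argument then concludes that the principal ${\rm PGL}(r,k)$--bundle is stable with respect to the chosen polarization on $\overline{S}$. Assembling these over all $i\in\Z/r\Z$ gives stability of $F_{{\rm PGL}(r,k)}\,\longrightarrow\,N^0(r)$, which is the assertion of Corollary \ref{corf}.

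The main obstacle I anticipate is the bookkeeping in the second paragraph: one must be careful that the order-$r$ conclusion of Theorem \ref{thm:order=r}(ii) genuinely transfers from the stack $\sN(r)_i$ (or from $M(r,\xi)$) down to the open subvariety $N^0(r)_i$ of the coarse space, and that the projective bundle $F_{{\rm PGL}(r,k)}$ really represents that class rather than a twist of it — this is exactly what part (i) of Theorem \ref{thm:order=r} and the $\Gamma$--descent of ${\mathcal P}^0$ in Corollary \ref{cor2} are for, so it is a matter of citing them correctly rather than proving something new. Everything else is a direct application of the boxed paragraph preceding the corollary.
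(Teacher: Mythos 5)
Your argument is correct and essentially reproduces the paper's own proof: Theorem \ref{thm:order=r}(ii) combined with Proposition \ref{thm:cod2} (equivalently Corollary \ref{cor1}) shows the Brauer class of $F_{{\rm PGL}(r,k)}$ still has order exactly $r$ on $N^0(r)_i$, and the paragraph preceding the corollary, via \cite[p.~267, Lemma~2.1]{BBGN}, then excludes reductions to proper parabolics over any nonempty open subset, which gives stability with respect to the chosen polarization. The only slip is cosmetic: the no-parabolic-reduction statement is the cited result of \cite{BBGN}, not Lemma \ref{lem2} of this paper.
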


%%%%%%%%%%%%%%%%%%%%%%%%%%%%%%%%%%%%%%%%%%%%%%%%%%%%%%%%%%%%

\end{document}